\title{Existence and nonexistence theorems for global weak solutions to  quasilinear wave equations  for the elasticity }
 \author{Yun-guang  Lu\footnote{K.K.Chen Institute for Advanced Studies\
Hangzhou Normal University, P. R. CHINA\
ylu2005@ustc.edu.cn} and Yuusuke Sugiyama\footnote{Department of Mathematics,\ Tokyo University of Science\ 
Kagurazaka 1-3, Shinjuku-ku, Tokyo 162-8601, Japane-mail:sugiyama@ma.kagu.tus.ac.jp}
}
\date{}
\theoremstyle{definition} 
\newtheorem{Def}{Deffinition}[section]
\newtheorem{Lemma}[Def]{Lemma}
\newcommand{\R}{\mathbb{R}} 
 \newcommand{\F}[2]{\frac{#1}{#2}}
\newcommand{\E}[0]{\varepsilon}
\newcommand{\lam}[0]{\lambda}
 \newcommand{\BE}[1]{\begin{array}{#1}}
\newcommand{\EE}[0]{\end{array}}
\newcommand{\usection}[1]{section}
\newtheorem{theorem}{\underline{Theorem}}
\newtheorem{lemma}[theorem]{\underline{Lemma}}
\newtheorem{Remark}[theorem]{\underline{Remark}}
\begin{document}
\maketitle %
\begin{abstract} In this paper,  by using
the theory of compensated compactness coupled with the
kinetic formulation by Lions, Perthame, Souganidis and Tadmor \cite{LPT,LPS},
 we prove the existence and nonexistence of  global generalized (nonnegative) solutions  of the nonlinearly degenerate wave equations
$v_{tt} =c (|v|^{s-1} v)_{xx}$ with the nonnegative initial
data $v_{0}(x)$ and $ s > 1$. This result is an extension of the results in the second author's paper \cite{Su}, where the existence and the
nonexistence of the unique global classical solution
were studied with a threshold on $\int_{-\infty}^{\infty}  v_{1}(x) dx$ and the non-degeneracy condition
$ v_{0}(x) \geq c_{0} > 0$ on the initial data.
\end{abstract}

 { Key Words: Global weak solutions; degenerate wave equations; viscosity method;
 compensated compactness \\}
  { Mathematics Subject Classification 2010: 35L15, 35A01, 62H12.}
\section{ \bf Introduction}
In this paper, we study the global generalized solutions  of the nonlinearly degenerate wave equations
\begin{equation}
\label{1.1} v_{tt}=c(|v|^{s-1}v)_{xx}, \quad - \infty < x <
\infty, \quad t>0,
\end{equation}
 with the initial data
\begin{equation}
\label{1.2} (v,v_{t})|_{t=0}=(v_{0}(x),v_{1}(x)), \quad - \infty <
x < \infty.
\end{equation}
To prove  the global  existence of solutions to \eqref{1.1}, we also consider the nonlinear hyperbolic systems of elasticity
 \begin{equation}
\label{1.3} v_t-u_x=0, \quad u_{t}-c(|v|^{s-1}v)_{x}=0
\end{equation}
with bounded initial date
\begin{equation}
\label{1.4} (v,u)|_{t=0}=(v_{0}(x),u_{0}(x)),
\end{equation}
where $v_{0}(x) \ge 0, s>1, c= \F{ \theta^{2}}{s}>0$  and $ \theta= \F{s+1}{2} > 0$ are constants.

A function $v \in L^{\infty} (\R \times \R^+)$ is called a generalized
solution of the Cauchy problem (\ref{1.1})-(\ref{1.2})
 if for any test function $\phi \in C_{0}^{\infty}( \R \times [0,\infty))$,
\begin{equation}
\label{1.5}
      \int_{0}^{\infty} \int_{-\infty}^{\infty} v \phi(x,t)_{tt}-c|v|^{s-1}v \phi(x,t)_{xx}dxdt
      + \int_{-\infty}^{\infty} v_{0}(x)\phi(x,0)_{t}- v_{1}(x) \phi(x,0) dx
      =0.
\end{equation}
A pair of functions $(v(x,t),u(x,t)) \in L^{\infty} (\R \times \R^+) \times L^{\infty} (\R \times [0,\infty))$ is called a generalized
 solution of the Cauchy problem (\ref{1.3})-(\ref{1.4}) if for any test function $\phi_{i}(x,t) \in C_{0}^{\infty}(\R
\times [0,\infty))$, i=1,2,
\begin{equation}\left\{
\label{1.6}
\begin{array}{l}
      \int_{0}^{\infty} \int_{-\infty}^{\infty} v \phi(x,t)_{1t}-u \phi(x,t)_{1x}dxdt
      + \int_{-\infty}^{\infty} v_{0}(x) \phi_{1}(x,0) dx =0,   \\  \\
       \int_{0}^{\infty} \int_{-\infty}^{\infty} u \phi(x,t)_{2t}-c|v|^{s-1}v \phi(x,t)_{2x}
       dxdt + \int_{-\infty}^{\infty} u_{0}(x) \phi_{2}(x,0) dx=0.
\end{array}\right.
\end{equation}
It is obvious that the generalized solutions of the Cauchy problem
(\ref{1.3})-(\ref{1.4}) are also the solutions of the Cauchy
problem (\ref{1.1})-(\ref{1.2}) if we specially choose
$\phi_{1}=\phi_{t}, \phi_{2}=\phi_{x}$ and $v_{1}(x)=u_{0}'(x)$  in
(\ref{1.6}).

Two eigenvalues of (\ref{1.3}) are
\begin{equation}
\label{1.7}
\lam_1=- (cs)^{\F{1}{2}}|v|^{\F{s-1}{2}}= - \theta |v|^{\F{s-1}{2}}, \hspace{0.3cm}
\lam_2= (cs)^{\F{1}{2}}|v|^{\F{s-1}{2}}= \theta |v|^{\F{s-1}{2}},
      \end{equation}
which coincide when  $ v=0$ and so in which \eqref{1.3} is nonstrictly
hyperbolic; two Riemann invariants of
\eqref{1.3} are
\begin{equation*}
z=u+ \int_{0}^{v} (cs)^{\F{1}{2}}|v|^{\F{s-1}{2}}  dv , \hspace{0.3cm}
w=u- \int_{0}^{v} (cs)^{\F{1}{2}}|v|^{\F{s-1}{2}} dv.
      \end{equation*}
Throughout this paper, we concentrate our study on the domain of $v \ge 0 $, then
the eigenvalues of (\ref{1.3}) are $ \lam_1=- \theta  v^{\F{s-1}{2}}, \lam_2=   \theta v^{\F{s-1}{2}}$
and the Riemann invariants are
\begin{equation*}
z=u+  \F{ 2(cs)^{\F{1}{2}}}{s+1} v^{\F{s+1}{2}}=  u+ v^{ \theta} , \hspace{0.3cm}
w=u- \F{ 2(cs)^{\F{1}{2}}}{s+1} v^{\F{s+1}{2}}=u- v^{ \theta}.
\end{equation*}
The Riemann invariants for \eqref{1.1} are given formally by
\begin{equation}
\label{1.9-2}
w=u -v^\theta, \hspace{0.3cm} z=\tilde{u} +v^\theta,
\end{equation}
where 
\begin{equation*}
u=\int_{-\infty} ^x v_t dx, \hspace{0.3cm} \tilde{u}=-\int_x ^\infty v_t dx.
\end{equation*}
We denote $w(x,0)$ and $z(x,0)$ by $w_0 (x)$ and $z_0 (x)$ respectively.
We have the following first main result in this paper.
\begin{theorem}\label{main1} (I). Suppose that
$z_0(x)$ and $w_0(x)) $  are decreasing and satisfy
\begin{equation}
\label{1.10}
  c_1 \leq w_0(x) \leq c_0  \leq z_0(x) \leq c_2,
  \end{equation}
where $c_1,c_0,c_2$ are three constants.  Then the Cauchy problem (\ref{1.3}) and (\ref{1.4}) has a  global weak
solution satisfying (\ref{1.6}).

(II). Let $v_0(x) \geq 0$ be bounded,  $ v_1(x) \in L^{1}(\R) $ and the limits $v_{0}^{\theta}(x)|_{x=\pm  \infty}$ exist. Moreover suppose that
\begin{equation}
\label{1.11}
  \int_{-\infty}^{\infty}  v_{1}(x) dx +
  v_{0}^{\theta}(x)|_{x=+ \infty} + v_{0}^{\theta}(x)|_{x=- \infty} \geq 0
  \end{equation}
and
\begin{equation}
\label{1.12}
  v_1(x) \pm    \theta v_{0}^{\F{s-1}{2}}(x) \partial_{x} v_{0}(x)  \leq 0.
 \end{equation}
Then the Cauchy problem \eqref{1.1} and \eqref{1.2} has a  global weak
solution satisfying \eqref{1.5}.
\end{theorem}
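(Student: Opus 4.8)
The plan is to establish Part (I) by the vanishing--viscosity method combined with compensated compactness and the kinetic formulation, and then to deduce Part (II) as a corollary through a suitable choice of the momentum variable.

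First I would regularize the elasticity system \eqref{1.3} parabolically, solving
\begin{equation*}
v^{\varepsilon}_t-u^{\varepsilon}_x=\varepsilon v^{\varepsilon}_{xx},
\qquad
u^{\varepsilon}_t-c(|v^{\varepsilon}|^{s-1}v^{\varepsilon})_x=\varepsilon u^{\varepsilon}_{xx},
\end{equation*}
with mollified initial data; local smooth solutions follow from standard parabolic theory. The decisive step is a uniform (in $\varepsilon$) $L^{\infty}$ bound, which I would obtain by an invariant--region argument in the Riemann coordinates $z^{\varepsilon}=u^{\varepsilon}+(v^{\varepsilon})^{\theta}$ and $w^{\varepsilon}=u^{\varepsilon}-(v^{\varepsilon})^{\theta}$. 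Computing the parabolic equations satisfied by $z^{\varepsilon},w^{\varepsilon}$, one checks that the rectangle $\{c_1\le w\le c_0\le z\le c_2\}$ of \eqref{1.10} is invariant, so the bounds propagate for all $t>0$; in particular $v^{\varepsilon}\ge 0$ and $0\le v^{\varepsilon}\le C$, $|u^{\varepsilon}|\le C$ uniformly. The monotonicity of $z_0,w_0$ is likewise propagated, since $z^{\varepsilon}_x,w^{\varepsilon}_x$ obey maximum-principle inequalities, giving extra structural control near the degeneracy set $\{v=0\}$.

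Next I would prove the $H^{-1}_{\mathrm{loc}}$ compactness of the entropy dissipation. From the $L^{\infty}$ bound and the energy estimate $\varepsilon\iint(|v^{\varepsilon}_x|^2+|u^{\varepsilon}_x|^2)\le C$, one shows that $\eta(v^{\varepsilon},u^{\varepsilon})_t+q(v^{\varepsilon},u^{\varepsilon})_x$ lies in a compact subset of $H^{-1}_{\mathrm{loc}}$ for every weak entropy pair $(\eta,q)$ of \eqref{1.3}. Since \eqref{1.3} is the $p$-system with $p(v)=-c|v|^{s-1}v$, i.e.\ a $\gamma$-law with $\gamma=s$, the full family of weak entropies from the kinetic formulation of \cite{LPT,LPS} is available. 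Inserting these entropies into the div--curl lemma yields Tartar's commutation relation for the Young measure $\nu_{x,t}$ generated by $(v^{\varepsilon},u^{\varepsilon})$, and the Lions--Perthame--Souganidis--Tadmor analysis then forces $\nu_{x,t}$ to reduce to a Dirac mass for a.e.\ $(x,t)$. Hence $(v^{\varepsilon},u^{\varepsilon})\to(v,u)$ a.e.; passing to the limit in \eqref{1.6}, the nonlinear flux $c|v^{\varepsilon}|^{s-1}v^{\varepsilon}$ converges strongly and the limit is a weak solution, proving Part (I).

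For Part (II) I would set $u_0(x)=\int_{-\infty}^{x}v_1(y)\,dy$, so that $u_0'=v_1$ and, with $z_0=u_0+v_0^{\theta}$ and $w_0=u_0-v_0^{\theta}$, the identity $\partial_x v_0^{\theta}=\theta v_0^{(s-1)/2}\partial_x v_0$ shows that \eqref{1.12} is exactly $z_{0,x}\le 0$ and $w_{0,x}\le 0$; thus both invariants are decreasing. Then $\sup w_0=-v_0^{\theta}(-\infty)$ and $\inf z_0=\int_{-\infty}^{\infty}v_1+v_0^{\theta}(+\infty)$, so that \eqref{1.11} is precisely the inequality $\sup w_0\le\inf z_0$; a constant $c_0$ with $w_0\le c_0\le z_0$ then exists and \eqref{1.10} holds. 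Applying Part (I) produces a weak solution of \eqref{1.3}--\eqref{1.4}, and the substitution $\phi_1=\phi_t,\ \phi_2=\phi_x,\ v_1=u_0'$ noted after \eqref{1.6} converts it into a weak solution of \eqref{1.1}--\eqref{1.2} satisfying \eqref{1.5}. The main obstacle is the Young-measure reduction at the vacuum $\{v=0\}$, where the two eigenvalues in \eqref{1.7} coincide and genuine nonlinearity degenerates; this is exactly the delicate point addressed by the kinetic entropies of \cite{LPT,LPS}.
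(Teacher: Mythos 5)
Your overall strategy---viscous approximation, $L^\infty$ bounds via Riemann invariants, compensated compactness with the kinetic entropies of Lions--Perthame--Souganidis--Tadmor to reduce the Young measure, and the reduction of Part (II) to Part (I) through $u_0(x)=\int_{-\infty}^x v_1$---is the same as the paper's, and your treatment of Part (II) matches the paper's argument essentially verbatim. The gap is in the very first step. You regularize the conservative system by adding $\varepsilon v_{xx}$ and $\varepsilon u_{xx}$ and assert that the rectangle $\{c_1\le w\le c_0\le z\le c_2\}$ is invariant. For that regularization one computes, for $v>0$,
\begin{equation*}
w_t+\lambda_2 w_x-\varepsilon w_{xx}=\varepsilon\,\theta(\theta-1)v^{\theta-2}(v_x)^2\ge 0,
\qquad
z_t+\lambda_1 z_x-\varepsilon z_{xx}=-\varepsilon\,\theta(\theta-1)v^{\theta-2}(v_x)^2\le 0,
\end{equation*}
since $v\mapsto v^{\theta}$ is convex ($\theta=(s+1)/2>1$). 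The maximum principle therefore controls $\inf w$ and $\sup z$ (the constraints $w\ge c_1$ and $z\le c_2$), but it pushes maxima of $w$ up and minima of $z$ down, so precisely the two constraints $w\le c_0\le z$ --- the ones encoding $2v^{\theta}=z-w\ge 0$, i.e.\ keeping the approximate solution in the region $v\ge 0$ where the kinetic entropies live --- are not obtained; this is the Chueh--Conley--Smoller quasi-convexity obstruction. The paper avoids it by putting the viscosity directly on the diagonalized equations (\ref{2.1}), for which the maximum principle delivers all four bounds in (\ref{2.6}) at once, and by shifting the data by $\pm\delta$ in (\ref{2.2}) so that $v^{\varepsilon,\delta}$ stays uniformly in $\varepsilon$ away from the degenerate line $v=0$; that positive lower bound is also what keeps the coefficients $\lambda_{iw},\lambda_{iz}\sim 1/v$ in (\ref{2.10}) regular and yields the one-sided gradient bounds (\ref{2.7}) that propagate the monotonicity of $w$ and $z$.

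A second, smaller issue: your $H^{-1}_{loc}$ compactness rests on the energy estimate $\varepsilon\iint(|v_x|^2+|u_x|^2)\le C$, but the natural energy inequality for this system only gives $\varepsilon\iint(|u_x|^2+|v|^{s-1}|v_x|^2)\le C$, which degenerates exactly at $v=0$; to use it you would need the additional LPS-type argument that the Hessians of all weak entropies are dominated by that of the mechanical energy. The paper instead obtains compactness from the pointwise bounds $|v_x|,|u_x|\le M(\delta)$ of (\ref{2.12})--(\ref{2.13}) together with the choice $\varepsilon\ll\delta$ (Lemma \ref{l2}), and only then sends $\delta\to 0$. So while your outline identifies the right machinery and the right endgame, the uniform estimates that feed that machinery are not established by the argument you give.
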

Next we give the second main theorem of this paper, which implies the nonexistence of solutions  satisfying $v \geq 0$.
From the proof of Theorem \ref{main1}, we can easily check that the global solutions of \eqref{1.1} and \eqref{1.3} constructed Theorem \ref{main1} satisfy that
 \begin{eqnarray}
\label{con-1}
v(x,t) \geq 0  \ \mbox{for a.a.} \ (x,t) \in \R \times \R^{+}
\end{eqnarray} 
\begin{eqnarray}
\label{con0}
\mbox{$w(x,t)$  and $z(x,t)$ are decreasing with $x$ for a.a. $t \geq 0$},
\end{eqnarray} 
\begin{eqnarray} 
\label{con--1}
w, z\in  L^{\infty} (\R \times \R^{+}).
\end{eqnarray}
Furthermore, we can show that the following properties are also satisfied for the global solutions in Theorem \ref{main1}, if we assume the additional regularity on initial data that $w_0, z_0 \in W^{1,1} _{loc} (\R)$:
\begin{eqnarray} 
\label{con--1-2}
w, z\in   W^{1,1} _{loc} (\R \times \R^{+}),
\end{eqnarray}
\begin{eqnarray}
\label{con2}
\| w_x (t)\|_{L^1}+ \| z_x (t) \|_{L^1} \leq \| w_x (0)\|_{L^1} +  \| z_x (0)\|_{L^1} \ \mbox{for a.a.} \ t\geq 0 
\end{eqnarray}
and
\begin{eqnarray}
\label{con3}\\
\| w_t (t)\|_{L^1}+ \| z_t (t) \|_{L^1} \leq C (\| w_x (0)\|_{L^1} +  \| z_x (0)\|_{L^1}) \ \mbox{for a.a.} \ t\geq 0.\notag
\end{eqnarray}
Furthermore,  the global solution of \eqref{1.1}  satisfies that
\begin{eqnarray}\label{con4}
\partial_t v(t,\cdot ) \in L^1 (\R)  \ \mbox{for a.a.} \ t \geq 0.
\end{eqnarray}
We have the following second main result in this paper.
\begin{theorem}\label{main2}(I).
Suppose that $w_0, z_0 \in W^{1,1} _{loc} (\R) \cap L^\infty (\R)$ and that $w_0$ and $z_0$ are  decreasing. 
Furthermore, we assume that for some $x, y \in \R$
\begin{eqnarray} \label{xy}
w_0 (x) > z_0 (y).
\end{eqnarray}
Then the Cauchy problem \eqref{1.3} and \eqref{1.4}  has no solutions satisfying the properties  \eqref{con-1}-\eqref{con3}.

(II). Let $v_0 \geq 0$, $w_0, z_0 \in W^{1,1} _{loc} (\R) \cap L^\infty (\R)$ and $v_1 \in L^1 (\R)$. Suppose  that the limits $v_{0}^{\theta}(x)|_{x=\pm  \infty}$ exist.
 Furthermore, we assume that \eqref{1.12} is satisfied and 
\begin{eqnarray} \label{xy2}
\int_{-\infty} ^{\infty} v_1 (x) dx +v_0 ^\theta (x)|_{x=\infty} +v_0 ^\theta (x)|_{x=-\infty} <0 .
\end{eqnarray}
Then the Cauchy problem \eqref{1.1} and \eqref{1.2}  has no solutions  satisfying the properties  \eqref{con-1}-\eqref{con4}.
\end{theorem}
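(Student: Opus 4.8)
The plan is to reduce both parts to a single statement about the Riemann invariants and then argue by contradiction using the rightward/leftward transport of $w$ and $z$. First I would dispose of part (II) by passing to the system: given a weak solution $v$ of \eqref{1.1} with $v_1\in L^1(\R)$ and property \eqref{con4}, set $u(x,t)=\int_{-\infty}^x \partial_t v(y,t)\,dy$, which is well defined by \eqref{con4}; then $(v,u)$ solves \eqref{1.3} with $u_0(x)=\int_{-\infty}^x v_1$, and the solution inherits \eqref{con-1}--\eqref{con3}. Computing the limits with $u_0(-\infty)=0$ and $z_0=u_0+v_0^\theta$, $w_0=u_0-v_0^\theta$ gives $w_0(-\infty)-z_0(+\infty)=-\big(\int v_1+v_0^\theta(+\infty)+v_0^\theta(-\infty)\big)$, so hypothesis \eqref{xy2} is exactly $w_0(-\infty)>z_0(+\infty)$. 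Since $w_0,z_0$ are decreasing, this is \eqref{xy} realized in the limits $x\to-\infty$, $y\to+\infty$. Hence it suffices to prove part (I).

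For part (I), suppose toward a contradiction that $(v,u)$ is a solution with \eqref{con-1}--\eqref{con3} and $w_-:=\sup_x w_0=w_0(-\infty)>z_0(+\infty)=\inf_x z_0=:z_+$. I would first record the transport structure: where $v>0$ the invariants obey $w_t+\lambda_2 w_x=0$ and $z_t+\lambda_1 z_x=0$ with $\lambda_2=\theta v^{(s-1)/2}\ge0\ge\lambda_1=-\theta v^{(s-1)/2}$, so $w$-levels travel to the right and $z$-levels to the left. Using the $BV$-regularity \eqref{con--1-2} and the monotonicity \eqref{con0} (monotone plus integrable derivative forces $w_x,z_x\to0$ at $\pm\infty$), I would show the boundary values are frozen, $w(-\infty,t)\equiv w_-$ and $z(+\infty,t)\equiv z_+$, with $w(\cdot,t)\le w_-$ and $z(\cdot,t)\ge z_+$. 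The constraint \eqref{con-1}, i.e. $z-w=2v^\theta\ge0$, is the pointwise inequality $z\ge w$.

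Now fix a level $\delta\in(z_+,w_-)$ and set $\xi(t)=\sup\{x:w(x,t)\ge\delta\}$ and $\eta(t)=\inf\{x:z(x,t)\le\delta\}$; both are finite by the frozen boundary values and monotonicity, and by continuity $\{w>\delta\}\subseteq(-\infty,\xi]$, $\{z<\delta\}\subseteq[\eta,\infty)$. The inequality $z\ge w$ forbids any point with simultaneously $w>\delta$ and $z<\delta$, hence $\xi(t)\le\eta(t)$ for all $t$. The heart of the argument is to show that rightward transport forces $\xi$ to be non-decreasing and leftward transport forces $\eta$ to be non-increasing, and that the separation is strict, so $\xi$ and $\eta$ must cross in finite time, contradicting $\xi\le\eta$. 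Quantitatively I would track the level curves through the speed $\theta v^{(s-1)/2}$ and the genuine nonlinearity away from $v=0$, using \eqref{con2}--\eqref{con3} to bound the $BV$ norms and thereby control the motion of the levels.

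The main obstacle is exactly the degeneracy at $v=0$, where \eqref{1.3} is only non-strictly hyperbolic and both speeds vanish: when the two level curves meet they do so at a point with $z=w=\delta$, i.e. $v=0$, so the naive crossing can stall rather than produce $\xi>\eta$. Overcoming this is where \eqref{con2}--\eqref{con3} must enter essentially: I would argue that a persistent stall requires the compressive pile-up of the values $w>\delta$ arriving from the left (and $z<\delta$ from the right), forcing $\|w_x(t)\|_{L^1}+\|z_x(t)\|_{L^1}$ to exceed its initial value in violation of \eqref{con2}; an entropy/generalized-characteristic analysis in the spirit of Dafermos would make this rigorous. As a consistency check, in the non-degenerate-at-infinity case $v_0^\theta(\pm\infty)=0$ the argument collapses to the elementary observation that $\frac{d}{dt}\int v\,dx=\int v_1<0$, so $\int v\,dx=\int v_0+t\int v_1$ becomes negative in finite time, contradicting $v\ge0$; the content of the theorem is to reach the same conclusion in the general, possibly degenerate, case through the Riemann-invariant formulation.
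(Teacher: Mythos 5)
Your reduction of Part (II) to Part (I) (setting $u(x,t)=\int_{-\infty}^x v_t\,dy$, which is well defined by \eqref{con4}, and checking that \eqref{xy2} is exactly $w_0(-\infty)>z_0(+\infty)$, hence \eqref{xy} at finite points) is the same as the paper's, and your preliminary structure for Part (I) --- the transport equations $w_t+\lambda_2 w_x=0$, $z_t+\lambda_1 z_x=0$, the consequent signs $w_t\ge 0\ge z_t$, and the control of the behaviour at $x=\pm\infty$ --- coincides with the paper's Lemmas \ref{eqeq} and \ref{liml}. The gap is in the step that is supposed to produce the contradiction. Your level-set argument gives $\xi(t)$ nondecreasing, $\eta(t)$ nonincreasing and $\xi\le\eta$, but, as you yourself note, when the two level sets approach each other they do so where $z=w=\delta$, i.e.\ $v=0$, and both characteristic speeds vanish, so nothing forces a crossing; the configuration can stall indefinitely. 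Your proposed repair --- that a stall causes a ``compressive pile-up'' violating \eqref{con2} --- does not work: since $w(\cdot,t)$ and $z(\cdot,t)$ are monotone with limits at $\pm\infty$ pinned down by Lemma \ref{liml}, one has $\|w_x(t)\|_{L^1}=w(-\infty,t)-w(+\infty,t)$, which cannot increase; there is no mechanism by which total variation accumulates. So the decisive step of Part (I) is missing, and the sketched substitute is not viable.

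The paper closes the argument by a different and more elementary device, which is essentially the global version of your own ``consistency check.'' From $v_t=u_x$ and Lemma \ref{liml} one gets the exact identity $\int_{-\infty}^{\infty}\bigl(v(x,t)-v_0(x)\bigr)dx=(u_+-u_-)t$, and this integral is split over $(-\infty,-M)$, $[-M,M]$, $(M,\infty)$ with $M$ chosen so that $w_0(-M)>z_0(M)$. The time-monotonicity of the Riemann invariants gives $u(-M,t)=w(-M,t)+v^{\theta}(-M,t)\ge w_0(-M)$ and $u(M,t)=z(M,t)-v^{\theta}(M,t)\le z_0(M)$, so the flux balance forces the (nonnegative, bounded) mass of $v$ on $[-M,M]$ to decrease at rate at least $w_0(-M)-z_0(M)>0$, which is impossible for large $t$. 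No tracking of level curves, genuine nonlinearity, or behaviour at the degeneracy $v=0$ is needed; the hypotheses \eqref{con2}--\eqref{con3} are used only to justify the weak formulation manipulations (Lemma \ref{eqeq}) and the limits in Lemma \ref{liml}, not as a variation bound to be violated. I would replace your level-set step with this flux-balance computation.
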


This paper extends the results in \cite{Su}. In \cite{Su}, the second author obtained a threshold $- \F{2}{a+1}$ of
$\int_{-\infty}^{\infty}  v_{1}(x) dx $ for the equation $v_{tt}=((1+v)^{2 \alpha} v_{x} )_{x} $ . If the initial data \eqref{1.2} satisfy
\begin{equation}
\label{1.13}
  v_1(x) \pm    (1+ v_{0})^{\alpha}(x) \partial_{x} v_{0}(x)  \leq 0
  \end{equation}
  and
\begin{equation*}
(v_{0}, v_{1}) \in H^{2}(\R) \times H^{1}(\R),  \quad 1+ v_{0}(x) \geq c_{0} >0,  \quad \int_{-\infty}^{\infty}  v_{1}(x) dx > - \F{2}{\alpha +1},
  \end{equation*}
  then the Cauchy problem (\ref{1.1}) and (\ref{1.2}) has a global unique solution $ v \in C( [0, \infty); H^{2}(\R)) \cap  C^{1}( [0, \infty); H^{1}(\R)) $
  satisfying $ v_{0}(x) \geq c_{1} >0 $ for all $(x,t) \in [0, \infty) \times \R$, where $ c_{0}, c_{1}$ are constants (The proof is classical and can be found
  in \cite{J, YN}). If the initial data (\ref{1.2}) satisfy (\ref{1.13}) and
  \begin{equation*}
(v_{0}, v_{1}) \in H^{2}(\R) \times H^{1}(\R),  \quad 1+ v_{0}(x) \geq c_{0} >0,  \quad \int_{-\infty}^{\infty}  v_{1}(x) dx < - \F{2}{\alpha+1},
  \end{equation*}
  then equation (\ref{1.1}) must degenerate at a finite time, namely, there exists $T^{*} >0$ such that a local unique solution
  $ v \in C( [0, T^{*}); H^{2}(\R)) \cap  C^{1}( [0, T^{*}); H^{1}(\R)) $ of the Cauchy problem  (\ref{1.1}) and (\ref{1.2}) exists, and
   \begin{equation*}
\lim_{ t \nearrow T^{*}}  1+ v_{0}(t, x_{0})=0 \quad    \mbox{ for some  }  \quad x_{0} \in \R.
  \end{equation*}
 One can expect that classical solutions can not be extended after the degenerate occurs, since
the strict hyperbolicity is lost. To avoid this difficulty, we treat our problem in the framework of the weak solution and
construct a solutions with $ v_{0}(x)$ having the degeneracy ($v_{0}(x_{0})=0$ for some $x_{0} \in \R$).
Under the assumption that $v \geq 0$, the decreasing property of $w_0$ and $z_0$ ensures the absence of shock waves.
Our main theorems give a threshold separating the existence and nonexistence of solutions to \eqref{1.1}
satisfying $v\geq 0$, under the assumption that  $w_0$ and $z_0$ are decreasing.
In \cite{ZZ1, ZZ2}, Zhang and Zheng  proved  the global existence of weak solutions to the 1D variational wave equation $v_{tt}=c(v)(c(v)v_x)_x$
with  the non-degeneracy condition that $c(u) \geq  c_0 $ for some constant $c_0 >0$.

This paper is organized as follows: In Section 2, we introduce a variant of the viscosity argument, and construct approximated solutions
of the Cauchy problem (\ref{1.3}) and (\ref{1.4}) by using the solutions of the parabolic system (\ref{2.1}) with the initial data (\ref{2.2}).
Under the conditions in the Part I of Theorem 1, we can easily obtain the necessary boundedness estimates (\ref{2.6}) and (\ref{2.7}) of
$(w^{\E, \delta}(x,t), z^{\E, \delta}(x,t))$, where the bound $M(\delta)$ in (\ref{2.7}) could tend to infinity  as $\delta$ goes to zero. Based on the
estimates (\ref{2.6}) and (\ref{2.7}) and the kinetic formulation by Lions, Perthame, Souganidis and Tadmor \cite{LPT,LPS}, in Section 3,
we prove the pointwise convergence
of $(u^{\E, \delta}(x,t), v^{\E, \delta}(x,t))$ by using the theory of the compensated compactness, and its limit $(u,v)$ is a generalized solution  of the Cauchy problem (\ref{1.3}) and (\ref{1.4}).
Finally, in the last part of Section 3, we shall prove all conditions in the Part I of Theorem \ref{main1} are satisfied under the assumptions of the
Part II of Theorem \ref{main1}, and obtain the global existence of the  generalized solutions  of the Cauchy problem (\ref{1.1}) and (\ref{1.2}),
which completes the proof of Theorem \ref{main1}. In Section 4, we prove Theorem \ref{main2}.
The proof of Theorem \ref{main2} are based on similar, but refined and simpler method in \cite{Su}
The key for the proof in \cite{Su} is the function $F(t)$:
$$
F(t)=\int_{-\infty} ^\infty  v(x,t) - v(x,0)dx.
$$
In the estimate for $F(t)$, we divide the integral region $(-\infty,\infty)$ of $F$ by three parts, using
characteristic curves. However, for non-classical solutions, the characteristic curves would not be defined.
Observing $0 \leq w_t (x,t)$ and $z_t (x,t) \leq 0$, we estimate $F$ more simply.

\section{ \bf Viscosity Solutions}

In this section we construct the approximated solutions of the Cauchy problem (\ref{1.3}) and (\ref{1.4}) by using the following
parabolic systems
\begin{equation}\left\{
\begin{array}{l}
\label{2.1}
      w_{t}+ \lam_2 w_{x}=\E w_{xx}         \\\\
       z_{t}+ \lam_1 z_{x}=\E z_{xx}
\end{array}\right.
\end{equation}
with initial data
\begin{equation}
\label{2.2}
  (w(x,0),z(x,0))=(w^{\delta}_{0}(x),z^{\delta}_{0}(x)) \ast G^{\delta},
\end{equation}
where $\E, \delta $ are small positive constants, $G^{\delta}$ is a mollifier,
\begin{equation*}\left\{
\begin{array}{l}
w^{\delta}_{0}(x)= u_{0}(x)-   (v^{\delta}_{0}(x))^{ \theta} =
 u_{0}(x)-    (v_{0}(x))^{ \theta} - \delta   \\\\
 z^{\delta}_{0}(x)= u_{0}(x)+  (v^{\delta}_{0}(x))^{ \theta} =
 u_{0}(x)+     (v_{0}(x))^{ \theta} + \delta
      \end{array}\right.
\end{equation*}
and $(u_{0}(x),v_{0}(x))$ are given by (\ref{1.4}). Thus $ w(x,0)$ and $ z(x,0)$ are smooth functions, and satisfy
\begin{equation}
\label{2.4}
c_1- \delta \leq w(x,0) \leq c_{0}-  \delta , \hspace{0.3cm}
c_{0} + \delta   \leq z(x,0) \leq c_2 +  \delta,
\end{equation}
\begin{equation}
\label{2.5}
  -M(\delta) \leq w_{x}(x,0) \leq 0, \hspace{0.3cm}  -M(\delta) \leq z_{x}(x,0)  \leq 0,
\end{equation}
where $ M(\delta)$ is a constant, and could tend to  infinity as $\delta$ tends to zero.

First, we have the following Lemma.
\begin{lemma}\label{l1} Let  $w(x,0),z(x,0)$
be bounded in $ C^1$ space and satisfy (\ref{2.4}) and (\ref{2.5}).
Moreover, suppose that $(w^{\E, \delta}(x,t), z^{\E, \delta}(x,t))$ is a smooth solution of (\ref{2.1}), (\ref{2.2}) defined in a strip $(- \infty, \infty) \times [0,T]$ with
$0 < T < \infty$. Then
 \begin{equation}
 \label{2.6}
c_1- \delta \leq w^{\E, \delta}(x,t) \leq c_{0} - \delta ,\hspace{0.2cm}  c_{0} + \delta \leq z^{\E, \delta}(x,t) \leq c_2+ \delta ,
\end{equation}
and
\begin{equation}
\label{2.7}
-M(\delta) \leq w^{\E, \delta}_{x}(x,t) \leq 0, \hspace{0.3cm}  -M(\delta) \leq z^{\E, \delta}_{x}(x,t)  \leq 0.
\end{equation}
\end{lemma}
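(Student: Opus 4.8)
The plan is to establish the two estimates \eqref{2.6} and \eqref{2.7} by maximum-principle arguments applied to the parabolic system \eqref{2.1}. The crucial structural observation is that $w$ and $z$ satisfy \emph{decoupled} convection-diffusion equations: the equation for $w$ is $w_t + \lam_2 w_x = \E w_{xx}$ and the equation for $z$ is $z_t + \lam_1 z_x = \E z_{xx}$, where the coefficients $\lam_1, \lam_2$ depend on $v$ (equivalently on $w, z$ through $v^\theta = (z-w)/2$) but otherwise each equation is a scalar linear parabolic equation once the coefficient field is frozen along the solution. Since a scalar parabolic equation $\psi_t + b(x,t)\psi_x = \E \psi_{xx}$ obeys the maximum principle, the range of each of $w^{\E,\delta}$ and $z^{\E,\delta}$ cannot exceed the range of its own initial data. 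Applying this to \eqref{2.4} immediately yields \eqref{2.6}.

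For the derivative estimate \eqref{2.7}, the strategy is to differentiate each equation in $x$ and obtain a parabolic equation for the derivatives $p := w^{\E,\delta}_x$ and $q := z^{\E,\delta}_x$. Differentiating $w_t + \lam_2 w_x = \E w_{xx}$ gives $p_t + \lam_2 p_x + (\partial_x \lam_2)\, p = \E p_{xx}$, and similarly for $q$. The key point is that $\lam_2 = \theta v^{\frac{s-1}{2}}$ depends on $x$ only through $v$, hence through $w$ and $z$; by the chain rule $\partial_x \lam_2$ is itself a linear combination of $p$ and $q$, so the equations for $(p,q)$ form a coupled quasilinear parabolic system with zeroth-order terms quadratic in $(p,q)$. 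I would then apply an invariant-region / comparison argument: the region $\{-M(\delta) \le p \le 0,\ -M(\delta) \le q \le 0\}$ should be shown to be invariant. The sign condition $p \le 0, q \le 0$ is preserved because the upper barrier $p=0$ (resp. $q=0$) forces the quadratic zeroth-order term to vanish there, so at a first touching point the maximum principle gives a contradiction; the lower bound $-M(\delta)$ likewise propagates from the initial data \eqref{2.5}.

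The main obstacle I expect is handling the coupling in the derivative system correctly, specifically controlling the sign of the first-order transport coefficient and the zeroth-order coefficient $\partial_x \lam_2$ at the boundary of the candidate invariant region. One must verify that along $p=0$ the cross term involving $q$ does not destroy invariance, and conversely along $q=0$; this requires checking that the off-diagonal contributions in $\partial_x\lam_2$ and $\partial_x\lam_1$ carry the favorable sign (or vanish) when one derivative is at its extreme value while the other is merely nonpositive. I would carry this out by writing $v^\theta = (z-w)/2$, expressing $\lam_1, \lam_2$ explicitly as functions of $z-w$, computing $\partial_x\lam_{1,2}$ as multiples of $(q-p)$, and then examining the resulting scalar inequalities at the four faces of the box $[-M(\delta),0]^2$. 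Once the zeroth-order terms are seen to preserve each face, the standard parabolic comparison principle (using the smoothing from the $\E\partial_{xx}$ term and the boundedness of coefficients on the strip $[0,T]$) closes the argument and gives \eqref{2.7}, with the constant $M(\delta)$ inherited from the initial bound in \eqref{2.5}.
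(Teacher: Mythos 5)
Your proposal follows essentially the same route as the paper: \eqref{2.6} via the scalar maximum principle applied to each equation of \eqref{2.1}, and \eqref{2.7} via differentiating in $x$, obtaining the coupled quasilinear parabolic system \eqref{2.8} for $(w_x,z_x)$ with quadratic zeroth-order terms, preserving the sign condition by the maximum principle, and checking the faces of the box $[-M(\delta),0]^2$ using the signs in \eqref{2.10} (the paper delegates this last step to Lemma 2.4 of \cite{Lu1}). The one ingredient to make explicit in your face-by-face check is that the coefficients $\lambda_{iw},\lambda_{iz}=\pm\frac{s-1}{4v}$ stay bounded because \eqref{2.6} forces $2v^{\theta}=z-w\geq 2\delta$.
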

\begin{proof}
 The estimates in (\ref{2.6})  can be obtained by using the maximum principle to (\ref{2.1}), (\ref{2.2}) and the condition (\ref{2.4}) directly.

We differentiate (\ref{2.1}) with respect to $x$ and let $w_x=-R, z_x=-S$; then
\begin{equation}\left\{
\begin{array}{l}
\label{2.8}
      R_{t}+ \lam_2 R_{x}- ( \lam_{2w} R+ \lam_{2z}S)R= \E R_{xx},         \\\\
      S_{t}+ \lam_1 S_{x}- ( \lam_{1w}R+ \lam_{1z}S)S= \E S_{xx}.
\end{array}\right.
\end{equation}
The nonnegativity $w^{\E, \delta}_{x}(x,t) \leq 0$  and $z^{\E, \delta}_{x}(x,t)  \leq 0$  in (\ref{2.7}) can be obtained by using the maximum principle
 to (\ref{2.8}) and the condition $ w_{x}(x,0) \leq 0$ and  $z_{x}(x,0)  \leq 0 $ in (\ref{2.5}).

A simple calculation yields
\begin{equation*}
u_{w}= \F{1}{2}, \hspace{0.3cm} u_{z}= \F{1}{2} , \hspace{0.3cm} v_{w}=-  \F{1}{2  \theta v^{\F{s-1}{2}} } , \hspace{0.3cm}
v_{z}= \F{1}{2  \theta v^{\F{s-1}{2}} }
\end{equation*}
and
\begin{equation}
\lam_{1w}= \lam_{2z}= \F{s-1}{4v} > 0, \hspace{0.3cm} \lam_{1z}= \lam_{2w}=- \F{s-1}{4v} < 0.
\label{2.10}
\end{equation}
Thus the lower bound $ -M(\delta) \leq w^{\E, \delta}_{x}(x,t),  -M(\delta) \leq z^{\E, \delta}_{x}(x,t) $  in (\ref{2.7})  is a direct conclusion of Lemma 2.4
in \cite{Lu1}. Lemma \ref{l1} is proved.
\end{proof}

From the estimates  in (\ref{2.6}), we have the following estimates about the functions
$u^{\E, \delta}(x,t)$ and $v^{\E, \delta}(x,t)$,
\begin{equation}
\label{2.11}
\F{c_{1}+c_{0}}{2}  \leq u^{\E, \delta}(x,t) \leq \F{c_{2}+c_{0}}{2},  \hspace{0.3cm}   c_{1}-c_{0}+ 2 \delta  \leq  2 (v^{\E, \delta}(x,t))^{ \theta} \leq c_{2}-c_{1}+ 2 \delta
\end{equation}
and
\begin{equation}
\label{2.12}
-M(\delta)   \leq u^{\E, \delta}_{x}(x,t) \leq 0,  \hspace{0.3cm}   -M(\delta)   \leq 2 \theta (v^{\E, \delta}(x,t))^{\F{s-1}{2}}  v^{\E, \delta}_{x}(x,t) \leq M(\delta)
\end{equation}
or
\begin{equation}
\label{2.13}
| v^{\E, \delta}_{x}(x,t)| \leq M_{1}(\delta),
\end{equation}
where $M_{1}(\delta)$  is a constant, and could tend to  infinity as $\delta$ tends to zero.

The positive, lower bound $ 2 (v^{\E, \delta}(x,t))^{ \theta} \geq c_{1}-c_{0}+ 2 \delta  $  in (\ref{2.11}) ensures the regularity of $\lam_{iw}, \lam_{2z}, i=1,2$ from (\ref{2.10}) and the following global existence of the Cauchy problem (\ref{2.1})-(\ref{2.2}).
\begin{theorem}\label{theo}Let  $(w(x,0), z(x,0)) \in C^1 $ satisfy \eqref{2.4} and \eqref{2.5}, then the Cauchy problem \eqref{2.1} with initial data \eqref{2.2} has a unique global smooth
solution satisfying \eqref{2.6} and \eqref{2.7}.
\end{theorem}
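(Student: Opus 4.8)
The plan is to prove global existence for the parabolic system \eqref{2.1}--\eqref{2.2} by combining a standard local existence theory with the a priori bounds already established in Lemma \ref{l1}, so that the local solution can be continued indefinitely. The heart of the matter is that \eqref{2.1} is a semilinear parabolic system once we treat $\lambda_1,\lambda_2$ as given coefficients along the solution: the principal part is simply $\E \partial_x^2$ applied diagonally, and the only nonlinearity enters through the transport coefficients $\lambda_i$, which are smooth functions of $v$ wherever $v$ stays bounded away from $0$. First I would invoke a standard fixed-point/contraction argument (or the theory of analytic semigroups generated by $\E \partial_x^2$ on a space such as $BUC(\R)$ or $C^1_b(\R)$) to obtain a unique local-in-time smooth solution $(w^{\E,\delta},z^{\E,\delta})$ on some maximal interval $[0,T_{\max})$, with $T_{\max}$ characterized by the blow-up alternative: either $T_{\max}=\infty$, or the relevant norm of the solution (together with the distance of $v$ from the degeneracy $v=0$) becomes unbounded as $t\nearrow T_{\max}$.

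The key step is then to show this blow-up alternative cannot be triggered in finite time. By Lemma \ref{l1}, any smooth solution on a strip $(-\infty,\infty)\times[0,T]$ automatically satisfies the bounds \eqref{2.6} and \eqref{2.7}. The upper and lower bounds in \eqref{2.6} give a uniform $L^\infty$ bound on $(w,z)$, hence on $(u,v)$ via \eqref{2.11}; crucially the lower bound $2(v^{\E,\delta})^\theta\geq c_1-c_0+2\delta$ forces $v^{\E,\delta}\geq c(\delta)>0$ uniformly, keeping the solution strictly away from the degenerate set where the coefficients $\lambda_{iw},\lambda_{iz}$ in \eqref{2.10} would lose regularity. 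The gradient bound \eqref{2.7}, equivalently \eqref{2.12}--\eqref{2.13}, controls the first spatial derivatives uniformly on $[0,T]$ for every finite $T$. Together these a priori estimates bound exactly the quantities whose blow-up would be required to terminate the solution, so the maximal existence time must be infinite.

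In carrying this out I would emphasize that the a priori bounds of Lemma \ref{l1} are $T$-independent in the sense that the constants $c_0,c_1,c_2,M(\delta)$ do not grow with $T$: they depend only on the initial data \eqref{2.4}--\eqref{2.5} and on $\delta,\E$. Feeding these uniform $C^1$ bounds back into the parabolic smoothing estimates for $\E\partial_x^2$ upgrades the regularity and yields uniform higher-order bounds on any strip, which is what legitimately closes the continuation argument and delivers a genuinely smooth (not merely mild) solution for all $t\geq 0$. Uniqueness follows from the same contraction estimate used for local existence, applied on each finite interval. The solution produced this way satisfies \eqref{2.6} and \eqref{2.7} by another direct application of Lemma \ref{l1}.

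The main obstacle I anticipate is handling the nonlinear coefficients $\lambda_{iw},\lambda_{iz}\sim \tfrac{s-1}{4v}$ from \eqref{2.10}, which become singular precisely at the degeneracy $v=0$; the whole continuation scheme hinges on the uniform lower bound $v^{\E,\delta}\geq c(\delta)>0$ from \eqref{2.11} to keep these coefficients smooth and bounded along the flow. This is why the $\delta$-regularization is indispensable here, and why the constants $M(\delta)$ are permitted to blow up as $\delta\to 0$: for fixed $\delta>0$ everything is uniformly controlled, and only in the later limit $\delta\to 0$ (carried out in Section 3 via compensated compactness) is the degeneracy allowed to reappear.
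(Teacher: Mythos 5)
Your proposal is correct and follows essentially the same route the paper intends: the paper gives no details for this theorem, simply calling it standard and citing Smoller and Theorem 2.3 of \cite{Lu1}, and the standard argument is exactly your local existence plus continuation via the a priori bounds of Lemma \ref{l1}, with the $\delta$-dependent positive lower bound on $v^{\E,\delta}$ keeping the coefficients $\lambda_{iw},\lambda_{iz}$ regular. (One small caution: the true lower bound from \eqref{2.6} is $2(v^{\E,\delta})^\theta\geq 2\delta$; the quantity $c_1-c_0+2\delta$ quoted from \eqref{2.11} need not be positive, so you should use $z-w\geq(c_0+\delta)-(c_0-\delta)=2\delta$ when asserting $v^{\E,\delta}\geq c(\delta)>0$.)
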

The proof of Theorem \ref{theo} is standard, and the details can be found in \cite{Sm} or Theorem 2.3 in \cite{Lu1}.

\section{ \bf Proof of Theorem 1.}
In this section, we prove Theorem 1.

A pair of smooth functions $( \eta(v,u), q(v,u))$ is called a pair
of entropy-entropy flux of system (\ref{1.3}) in the region of $ v > 0$ if $( \eta(v,u),
q(v,u))$ satisfies
\begin{equation}
\label{3.1}
q_{u}=-\eta_{v},  \quad q_{v}=- \theta^{2} v^{s-1}
\eta_{u}.
\end{equation}
Eliminating the $q$ from (\ref{3.1}), we have the following
entropy equation of system (\ref{1.3}).
\begin{equation}
\label{3.2} \eta_{vv}= \theta^{2}v^{s-1}  \eta_{uu}.
\end{equation}
Consider (\ref{3.2}) in the region $v > 0$ with the following
initial conditions
\begin{equation}
\label{3.3} \eta(0,u)=d^{0}f(u),   \quad
\eta_{v}(0,u)=0,
\end{equation}
where $d^{0}= \int_{-1}^{1} (1- \tau^{2})^{ \lam} d \tau.$
Then from the results in \cite{JPP,LPS,LPT}, an entropy of (\ref{3.2}) with (\ref{3.3}) in the region
$v>0$ is
\begin{equation*}
 \eta^{0}(v,u)=\int_{-\infty}^{\infty}
f(\xi)G(v,\xi-u) d \xi,
\end{equation*}
where the fundamental solution
\begin{equation*}
 G(v,u-\xi)=v(v^{s+1}-(\xi-u)^{2})_{+}^{ \lam},
\end{equation*}
the notation $x_{+}=max (0,x)$ and $ \lam=- \F{s+3}{2(s+1)} \in
(-1,0)$. The entropy flux $q^{0}(v,u)$ associated with
$\eta^{0}(v,u)$ in the region $v>0$ is
\begin{equation*}
 q^{0}(v,u)= - \int_{-\infty}^{\infty}
f(\xi) \theta \F{\xi-u}{v} G(v,\xi-u) d \xi.
\end{equation*}

Letting $ \xi=u+v^{\F{s+1}{2}} \tau$, by simple calculations, we have on
$v>0$
\begin{equation*}\begin{array}{ll}
 \eta^{0}(v,u)  = \int_{-\infty}^{\infty}
f(\xi)G(v,\xi-u) d \xi = \int_{w}^{z} f(\xi)v (z-
\xi)^{\lam}(\xi-w)^{\lam} d \xi \\  &\\ = \int_{-1}^{1}
f(u+v^{\F{s+1}{2}} \tau)v(v^{\F{s+1}{2}})^{(2 \lam+1)} (1-
\tau^{2})^{\lam} d \tau = \int_{-1}^{1} f(u+v^{\F{s+1}{2}}
\tau) (1- \tau^{2})^{\lam} d \tau,
\end{array}
\end{equation*}
and
\begin{equation*}\begin{array}{ll}
 q^{0}(v,u)  = - \int_{-\infty}^{\infty}
f(\xi) \theta \F{\xi-u}{v} G(v,\xi-u) d \xi = -
\int_{w}^{z} f(\xi) \theta (\xi-u)(z- \xi)^{\lam}(\xi-w)^{\lam} d \xi \\
&\\ = - \int_{-1}^{1} f(u+v^{\F{s+1}{2}} \tau) \theta
 v^{\F{s+1}{2}} \tau (v^{\F{s+1}{2}})^{(2 \lam+1)} (1- \tau^{2})^{\lam} d
 \tau \\
 &\\=-
\int_{-1}^{1} f(u+v^{\F{s+1}{2}} \tau) \theta v^{\F{s-1}{2}}
\tau (1- \tau^{2})^{\lam} d \tau.
\end{array}
\end{equation*}

We consider the matrix
 \begin{equation*}
A =\left( \begin{array}{cc}
a & b\\\\
e & d
\end{array} \right)= \left( \begin{array}{cc}
w_{v} & w_{u}\\\\
z_{v} & z_{u}
\end{array} \right)^{-1}=\left( \begin{array}{cc}
- \F{1}{2 \theta  v^{\F{s-1}{2}} } & \F{1}{2 \theta  v^{\F{s-1}{2}} }\\\\
 \F{1}{2} & \F{1}{2}
\end{array} \right),
\end{equation*}
and multiply it by the two sides in (\ref{2.1}), then (\ref{2.1}) can be rewritten
as follows:
\begin{equation}\begin{array}{ll}
\label{3.10}
      v_t-u_x=  \E (a w_{xx}+ bz_{xx}) =   \\\\
       \E (a w_{x}+ bz_{x})_{x}-  \E (a_{x} w_{x}+ b_{x}z_{x})
      = \E v_{xx}-   \E (a_{x} w_{x}+ b_{x}z_{x})
\end{array}
\end{equation}
and
   \begin{equation}\begin{array}{ll}
\label{3.11}
       u_{t}-c(v^{s})_{x}= \E (e w_{xx}+ d z_{xx}) \\\\
       = \E (e w_{x}+ dz_{x})_{x}-  \E (e_{x} w_{x}+ d_{x}z_{x})
       = \E u_{xx}.
\end{array}
\end{equation}

First, we have the following Lemma:
\begin{lemma} \label{l2}
\begin{equation}
 \label{3.12}
v^{\E, \delta}(x,t)_{t}-u^{\E,\delta}(x,t)_{x},
\end{equation}
\begin{equation}
 \label{3.13}
 u^{\E, \delta}_{t}-c((v^{\E, \delta})^{s})_{x}
 \end{equation}
 and
\begin{equation}
\label{3.14}
\eta^{0}(v^{\E,\delta}(x,t),u^{\E,\delta}(x,t))_{t}
+q^{0}(v^{\E,\delta}(x,t),u^{\E,\delta}(x,t))_{x}
\end{equation}
are compact in  $H^{-1}_{loc}(\R \times \R^{+})$, for any $\eta^{0} \in C^{2}(\R)$.
 \end{lemma}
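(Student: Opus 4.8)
The goal is to establish $H^{-1}_{loc}$-compactness of the three quantities \eqref{3.12}, \eqref{3.13}, \eqref{3.14} uniformly in $\E,\delta$, which is precisely the structural input needed to invoke compensated compactness (the div-curl lemma) in the subsequent convergence argument. My plan is to treat these as the time-plus-space divergences of approximate conservation laws, and to use Murat's lemma: a sequence bounded in $W^{-1,p}_{loc}$ for some $p>2$ and lying in a compact subset of $W^{-1,2}_{loc}$ plus a bounded subset of measures is compact in $H^{-1}_{loc}$. In practice one splits each expression into a term that is already bounded in $W^{-1,\infty}_{loc}$ (hence in a bounded set of measures, giving the $W^{-1,p}$ bound for free via the $L^\infty$ bounds \eqref{2.6}, \eqref{2.11}) and a term that converges to zero strongly in $H^{-1}_{loc}$.

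\textbf{The two conservation-law pieces \eqref{3.12}–\eqref{3.13}.}
First I would use the rewritten viscous system \eqref{3.10}–\eqref{3.11}. For \eqref{3.13} this is immediate: from \eqref{3.11}, $u^{\E,\delta}_t - c((v^{\E,\delta})^s)_x = \E u^{\E,\delta}_{xx} = \E (u^{\E,\delta}_x)_x$. Since $u^{\E,\delta}$ is uniformly bounded in $L^\infty$ by \eqref{2.11}, the right side is the $x$-derivative of $\E u^{\E,\delta}_x$; one shows $\sqrt{\E}\,u^{\E,\delta}_x$ is bounded in $L^2_{loc}$ (the standard viscous energy estimate, multiplying by $u^{\E,\delta}$ and integrating), so $\E u^{\E,\delta}_x = \sqrt{\E}\cdot(\sqrt{\E}\,u^{\E,\delta}_x)\to 0$ in $L^2_{loc}$, whence $\E(u^{\E,\delta}_x)_x\to 0$ in $H^{-1}_{loc}$. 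For \eqref{3.12}, equation \eqref{3.10} gives $v^{\E,\delta}_t - u^{\E,\delta}_x = \E v^{\E,\delta}_{xx} - \E(a_x w_x + b_x z_x)$. The term $\E v^{\E,\delta}_{xx} = \E(v^{\E,\delta}_x)_x$ is handled exactly as before once $\sqrt{\E}\,v^{\E,\delta}_x$ is controlled in $L^2_{loc}$. The remaining lower-order term $\E(a_x w_x + b_x z_x)$ is bounded in $L^1_{loc}$, hence bounded in the space of measures and therefore in $W^{-1,p}_{loc}$ for $p<2$; combined with the $L^\infty$ bound on $v^{\E,\delta}$ (which places the whole expression in a bounded set of $W^{-1,\infty}_{loc}$), Murat's lemma yields compactness in $H^{-1}_{loc}$.

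\textbf{The entropy piece \eqref{3.14} and the main obstacle.}
For the entropy production I would compute $\eta^0_t + q^0_x$ along the viscous system. Using the entropy relations \eqref{3.1} and the viscous equations, a direct calculation gives $\eta^0_t + q^0_x = \E\,(\text{divergence term}) - \E\,(\text{quadratic form in } v^{\E,\delta}_x, u^{\E,\delta}_x)$, where the quadratic form is $\E$ times $\nabla^2\eta^0$ contracted with $(v^{\E,\delta}_x, u^{\E,\delta}_x)$. The divergence term is treated as above (it tends to zero strongly in $H^{-1}_{loc}$). The genuinely delicate point — and the step I expect to be the main obstacle — is the entropy-dissipation term: one needs the quadratic form $\E\,\nabla^2\eta^0(v^{\E,\delta},u^{\E,\delta})[(v^{\E,\delta}_x,u^{\E,\delta}_x),(v^{\E,\delta}_x,u^{\E,\delta}_x)]$ to be bounded in $L^1_{loc}$ uniformly. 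This is where the degeneracy at $v=0$ bites: $\eta^0$ is only $C^2$ away from $v=0$, and the second derivatives of $\eta^0$ carry negative powers of $v$ through $\lambda_{iw},\lambda_{iz} = \pm\frac{s-1}{4v}$ from \eqref{2.10}, while the gradient bounds \eqref{2.12}–\eqref{2.13} likewise blow up as $\delta\to 0$. One therefore has to exploit the strict positivity $2(v^{\E,\delta})^\theta \ge c_1-c_0+2\delta>0$ from \eqref{2.11}, which keeps $v^{\E,\delta}$ bounded away from zero for each fixed $\delta$, together with the uniform $L^2_{loc}$ control of $\sqrt{\E}\,(v^{\E,\delta}_x,u^{\E,\delta}_x)$ from the energy estimate, so that the dissipation measure $\E\,|\nabla v^{\E,\delta}|^2$-type terms stay bounded in $L^1_{loc}$. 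Once this uniform $L^1_{loc}$ bound is secured, the dissipation term is bounded in measures, the expression \eqref{3.14} lies in a bounded set of $W^{-1,\infty}_{loc}$ by the $L^\infty$ bound on $\eta^0(v^{\E,\delta},u^{\E,\delta})$, and Murat's lemma again closes the argument. I would conclude by remarking that all three estimates are uniform in $\E$ and $\delta$, which is what Lemma \ref{l2} asserts.
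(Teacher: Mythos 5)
Your overall architecture (start from the rewritten viscous equations \eqref{3.10}--\eqref{3.11}, derive the entropy production identity corresponding to \eqref{3.16}, split each quantity into a divergence part that vanishes strongly in $H^{-1}_{loc}$ plus a lower-order part, and close with Murat's interpolation lemma) has the same shape as the paper's argument, and you correctly locate the danger in the quadratic entropy-dissipation term. But there is a genuine gap at the decisive step. The only gradient bounds produced in Section 2 are the pointwise bounds \eqref{2.12}--\eqref{2.13}, whose constants $M(\delta)$, $M_1(\delta)$ blow up as $\delta\to 0$. The uniform-in-$\delta$ energy estimate $\|\sqrt{\E}\,(v^{\E,\delta}_x,u^{\E,\delta}_x)\|_{L^2_{loc}}\le C$ on which your argument rests is asserted but never derived, and it is not the standard one here: the viscosity is added in the Riemann invariants, so the $v$-equation \eqref{3.10} carries the extra source $\E(a_xw_x+b_xz_x)$ whose coefficients $a_x, b_x$ contain negative powers of $v$ and themselves degenerate as $\delta\to 0$; the same is true of the $L^1_{loc}$ bound you claim for that source term. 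Likewise, the lower bound $2(v^{\E,\delta})^{\theta}\ge c_1-c_0+2\delta$ from \eqref{2.11} that you propose to exploit for the dissipation term vanishes with $\delta$, so it yields bounds only for each \emph{fixed} $\delta$, never uniform ones. Your concluding remark that all three estimates are uniform in $\E$ and $\delta$ is therefore unsubstantiated, and is in fact contrary to what the estimates of Section 2 can deliver.

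The paper closes exactly this gap by a device that is absent from your proposal: it accepts that every error term is only bounded by $M(\delta)$ in $L^{\infty}$ and then \emph{couples the two parameters}, choosing $\E=\E(\delta)$ so much smaller than $\delta$ that $\E M(\delta)\to 0$. With that choice the terms $\E(a_xw_x+b_xz_x)$, $\E(a_xw_x+b_xz_x)\eta^{0}_v$ and $\E\bigl(\eta^{0}_{vv}(v_x)^2+2\eta^{0}_{vu}v_xu_x+\eta^{0}_{uu}(u_x)^2\bigr)$ are uniformly bounded (indeed vanish) in $L^{\infty}_{loc}$, hence lie in a set compactly embedded in $H^{-1}_{loc}$, while $\E v_{xx}$, $\E u_{xx}$ and $\E\eta^{0}_{xx}$ are handled by a single integration by parts against $\phi\in H^{1}_{0}$ using $|\E v_x|,|\E u_x|\le \E M(\delta)\to 0$; no Murat interpolation and no uniform dissipation bound are needed. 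To repair your proof you must either introduce this coupling $\E\ll\delta$ explicitly, or actually establish the uniform $L^1_{loc}$ bound on the entropy dissipation that your Murat argument requires --- and the latter is precisely the hard point that the diagonal choice of parameters is designed to avoid.
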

\begin{proof}
For simplicity,  we omit the
superscripts $\E, \delta$. From the estimate in (\ref{2.13}), for any function $\phi \in H_{0}^{1}$,  we have that
\begin{equation*}
| \int_0^{\infty} \int_{-\infty}^{\infty}
 \E v_{xx} \phi dxdt | = | \int_0^{\infty} \int_{-\infty}^{\infty}
 \E v_{x} \phi_{x} dxdt |,
\end{equation*}
is compact and the term $ \E (a_{x} w_{x}+ b_{x}z_{x}) $ on the right-hand side of (\ref{3.10}) is uniformly bounded,
if we choose $ \E $ to be much smaller than $ \delta $. Thus (\ref{3.12})  is compact in $H^{-1}_{loc}(\R \times \R^{+})$.
Similarly, we can prove from (\ref{2.11}) that (\ref{3.13})  is compact in $H^{-1}_{loc}(\R \times \R^{+})$.

To prove (\ref{3.14})  to be compact in $H^{-1}_{loc}(\R \times \R^{+})$, we multiply  (\ref{3.10}) by $\eta^{0}(v,u)_{v} $
and (\ref{3.11}) by  $ \eta^{0}(v,u)_{u}$
to obtain that
\begin{equation}\begin{array}{lcl}
\label{3.16} & & \eta^{0}(v^{\E,\delta},u^{\E,\delta})_{t}+ q^{0}(v^{\E,\delta},
u^{\E,\delta})_{x} \\\\
&=& \E \eta^{0}(v^{\E,\delta},u^{\E,\delta})_{xx} -   \E (a_{x} w_{x}+ b_{x}z_{x}) \eta^{0}(v^{\E,\delta},u^{\E,\delta})_{v}  \\\\
&-& \E(\eta^{0}(v^{\E,\delta},u^{\E,\delta})_{vv}
(v^{\E,\delta}_{x})^{2} +2 \eta^{0}(v^{\E,\delta},u^{\E,\delta})_{vu} v^{\E,\delta}_{x}u^{\E,\delta}_{x}+
\eta^{0}(v^{\E,\delta},u^{\E,\delta})_{uu}(u^{\E,\delta}_{x})^{2}).
\end{array}
\end{equation}
Since the boundedness estimates given in (\ref{2.11})-(\ref{2.13}), and for any $  \eta^{0} \in C^{2}(\R)$,  we know that the terms in (\ref{3.16}) satisfy
\begin{equation*}
|(a_{x} w_{x}+ b_{x}z_{x}) \eta^{0}(v^{\E,\delta},u^{\E,\delta})_{v}| \leq M(\delta)
\end{equation*}
and
\begin{equation*}
|\eta^{0}(v^{\E,\delta},u^{\E,\delta})_{vv}
(v^{\E,\delta}_{x})^{2} +2 \eta^{0}(v^{\E,\delta},u^{\E,\delta})_{vu} v^{\E,\delta}_{x}u^{\E,\delta}_{x}+
\eta^{0}(v^{\E,\delta},u^{\E,\delta})_{uu}(u^{\E,\delta}_{x})^{2} |\leq M(\delta),
\end{equation*}
where $ M(\delta) $ is a positive constant, which could tend to infinity as $ \delta $ tends to zero.

Moreover,  for any function $\phi \in H_{0}^{1} (\R)$,  we have that
\begin{equation*}
| \int_0^{\infty} \int_{-\infty}^{\infty}
  \eta^{0}(v^{\E,\delta},u^{\E,\delta})_{xx} \phi dxdt | = | \int_0^{\infty} \int_{-\infty}^{\infty}
 \eta^{0}(v^{\E,\delta},u^{\E,\delta})_{x} \phi_{x} dxdt | \leq M(\delta),
\end{equation*}
thus the right-hand side of (\ref{3.16}) is compact in  $H^{-1}_{loc}(\R \times \R^{+})$ if we choose $ \E $ to be much smaller than $ \delta $.
Lemma \ref{l2} is proved.
\end{proof}
Now we prove the Part I in Theorem \ref{main1}. From the definition (\ref{1.6}) of the generalized
 solutions of the Cauchy problem (\ref{1.3}) and (\ref{1.4}), and the equations (\ref{3.10}) and (\ref{3.11}), it is sufficient to prove
 the pointwise convergence of $(v^{\E,\delta},u^{\E,\delta})$. By using
the theory of compensated compactness \cite{Mu,Ta}, if  $ \nu_{x,t}$ is the family of positive probability measures
with respect to the viscosity solutions $(v^{\E,\delta},u^{\E,\delta})$ , we only need to prove that the support set of the Young measure $\nu_{x,t}$ is
concentrated on one point$(x,t)$, or the Young measure $\nu_{x,t}$ is a Dirac
measure.

Suppose the support set of the Young measure $\nu_{x,t}$ is
concentrated on the line $v=0$, or $ \mbox{ supp\,}
\nu_{x,t}=\{v=0\}$,  then since (\ref{3.12}) and (\ref{3.13}) given in Lemma \ref{l2},  using the measure equation to the
entropy-entropy flux pairs $(v,-u)$ and $(u, cv^{s})$, we get
\begin{equation*}
 <\nu_{x,t}, u>^{2}=<\nu_{x,t}, u^{2}>,
\end{equation*}
which inplies that $\nu_{x,t}$ is a Dirac measure and the support
set is one point $(0,\bar{u})$.

Suppose, for fixed $(x,t)$, the support set of the Young
measure $\nu_{x,t}$ is concentrated on $v \geq 0$ , but not only on $v=0$.
 Then clearly $<\nu_{x,t},
\eta^{0}> \ne 0$. As done in \cite{LPS,LPT}, using the measure equation in the theory of
compensated compactness to the entropy-entropy flux pairs
$(\eta^{0}(v,u),q^{0}(v,u))$, we get
\begin{equation*}\begin{array}{ll}
 & <\nu_{x,t},  \int_{-\infty}^{\infty}
f_{1}(\xi_{1})G(v,\xi_{1}-u) d \xi_{1}><\nu_{x,t},\int_{-\infty}^{\infty}
f_{2}(\xi_{2}) \theta \F{\xi_{2}-u}{v} G(v,\xi_{2}-u) d \xi_{2}>
\\\\& - <\nu_{x,t},  \int_{-\infty}^{\infty}
f_{2}(\xi_{2})G(v,\xi_{2}-u) d \xi_{2}><\nu_{x,t},\int_{-\infty}^{\infty}
f_{1}(\xi_{1}) \theta \F{\xi_{1}-u}{v} G(v,\xi_{1}-u) d \xi_{1}> \\\\
&=<\nu_{x,t}, \int_{-\infty}^{\infty} \int_{-\infty}^{\infty} f_{1}(\xi_{1}) f_{2}(\xi_{2}) \theta \F{\xi_{2}-\xi_{1}}{v} G(v,\xi_{1}-u) G(v,\xi_{2}-u) d \xi_{1} d \xi_{2}   >
\end{array}
\end{equation*}
or
\begin{equation}\begin{array}{ll}
\label{3.22}
& \int_{-\infty}^{\infty} f_{1}(\xi_{1})  <\nu_{x,t},
G(v,\xi_{1}-u) >  d \xi_{1}  \cdot  \int_{-\infty}^{\infty} f_{2}(\xi_{2})  <\nu_{x,t},  \F{\xi_{2}-u}{v} G(v,\xi_{2}-u)>  d \xi_{2}
\\\\& -  \int_{-\infty}^{\infty} f_{2}(\xi_{2})  <\nu_{x,t},
G(v,\xi_{2}-u) >  d \xi_{2} \cdot  \int_{-\infty}^{\infty} f_{1}(\xi_{1})  <\nu_{x,t},  \F{\xi_{1}-u}{v} G(v,\xi_{1}-u)>  d \xi_{1}
 \\\\
&= \int_{-\infty}^{\infty} \int_{-\infty}^{\infty} f_{1}(\xi_{1}) f_{2}(\xi_{2}) <\nu_{x,t},  \F{\xi_{2}-\xi_{1}}{v} G(v,\xi_{1}-u) G(v,\xi_{2}-u) > d \xi_{1} d \xi_{2}.
\end{array}
\end{equation}
(\ref{3.22}) holds for arbitrary functions $ f_{1}, f_{2}$, and this yields
\begin{equation}\begin{array}{ll}
\label{3.23} & <v H(v,u,\xi_{1})><
(\xi_{2}-u)H(v,u,\xi_{2})>
\\\\& -<v H(v,u,\xi_{2})><
(\xi_{1}-u)H(v,u,\xi_{1})>\\\\
&=<(\xi_{2}-\xi_{1})v H(v,u,\xi_{1})H(v,u,\xi_{2})>,
\end{array}
\end{equation}
where we use the notation $<H(v,u,\xi)>=<\nu_{x,t},H(v,u,\xi) >$
and
\begin{equation*}
H(v,u,\xi)=(v^{s+1}-(\xi-u)^{2})_{+}^{ \lam}.
\end{equation*}
Let $I=[w,z]$ for
each $(v,u) \in \mbox{ supp\,} \nu_{x,t}$, where $w=u-v^{\F{s+1}{2}}, z=u+v^{\F{s+1}{2}}$.  Dividing (\ref{3.23})
by $<v H(v,u,\xi_{1})><vH(v,u,\xi_{2})>$ and sending $\xi_{2}$ to $\xi_{1}$,
we obtain
\begin{equation}
\label{3.25} \F{ \partial}{ \partial \xi} \bigl( \F{< (\xi-u) H(v,u,\xi)>}{<vH(v,u,\xi)>} \bigr) =
\F{<v H(v,u,\xi)^{2}>}{<vH(v,u,\xi)>^{2}}.
\end{equation}
Again using the measure equation between
$(\eta^{0}(v,u),q^{0}(v,u))$ and $(v,-u)$, we get
\begin{equation*}\begin{array}{ll}
 &- \theta  <v>< (\xi-u)
H(v,u,\xi)>+<u><vH(v,u,\xi)>\\\\
&=- \theta  <(\xi-u)vH(v,u,\xi)>+<uvH(v,u,\xi)>,
\end{array}
\end{equation*}
which can be rewritten as
\begin{equation}\begin{array}{ll}
\label{3.27} &- \F{\theta <v><(\xi-u)
H(v,u,\xi)>+<u><vH(v,u,\xi)>}{<vH(v,u,\xi)>} \\\\
&=- \theta \xi +(1+\theta) \F{<uvH(v,u,\xi)>}{<vH(v,u,\xi)>}
\quad  \mbox{ in } I.
\end{array}
\end{equation}
Differentiating (\ref{3.27}) in $\xi$ and combining the outcome
with (\ref{3.25}), we also obtain
\begin{equation*}
\F{ \partial}{ \partial \xi} \bigl(
\F{<uvH(v,u,\xi)>}{<vH(v,u,\xi)>} \bigr) =-
\F{\theta}{1+\theta} \bigl(<v>
\F{<vH(v,u,\xi)^{2}>}{<vH(v,u,\xi)>^{2}}-1 \bigr) \leq 0.
\end{equation*}
 Following the steps given in Proposition II.1 in \cite{LPS} and Lemma 6 in \cite{LPT}, we can prove that the Young measure $\nu_{x,t}$ is a Dirac
measure, which implies the pointwise convergence of the viscosity solutions $(v^{\E,\delta},u^{\E,\delta})$. It is clear that
the limit $(v,u)$ of $(v^{\E,\delta},u^{\E,\delta})$ is a weak solution of the Cauchy problem (\ref{1.3}) and (\ref{1.4}) if we let
$ \E, \delta $ in (\ref{2.2}),(\ref{3.10}),(\ref{3.11}) go to zero. The Part I in Theorem 1 is proved.

Under the conditions given in the Part II in Theorem 1, we consider
the Cauchy problem (\ref{1.3}) and (\ref{1.4}), where
\begin{equation*}
u_{0}(x)= \int_{-\infty}^{x} v_{1}(\xi) d \xi.
\end{equation*}
Then from the condition (\ref{1.12}), we know $z'_{0}(x) \leq 0, w'_{0}(x) \leq 0 $ and so both $z_0(x)$ and $w_0(x)$  are decreasing.
From the conditions (\ref{1.11}) and (\ref{1.12}), we have that
 \begin{equation*}
(u_{0}( + \infty) + v_{0}^{ \theta}( + \infty))- (u_{0}(x) + v_{0}^{ \theta}(x))  = \int_{x}^{ + \infty} u'_{0}(\xi) + (v_{0}^{ \theta}(\xi))' d \xi \leq 0
\end{equation*}
and so
\begin{equation}\begin{array}{ll}
\label{3.31}
& z_0(x) = u_{0}(x)+  v_{0}^{ \theta}(x) \geq  u_{0}(+ \infty) + v_{0}^{ \theta}( + \infty) \\\\
&= \int_{ - \infty}^{ + \infty} v_{1}(\xi) d \xi + v_{0}^{ \theta}( + \infty) \geq -v_{0}^{ \theta}( - \infty),
\end{array}
\end{equation}
where the constant $ -v_{0}^{ \theta}( - \infty) $ is corresponding to  $ c_{0}$ in (\ref{1.10}).
Similarly
 \begin{equation*}
(u_{0}(x) - v_{0}^{ \theta}(x))- (u_{0}( - \infty) - v_{0}^{ \theta}( - \infty))   = \int_{- \infty}^{x} u'_{0}(\xi) - (v_{0}^{ \theta}(\xi))' d \xi \leq 0,
\end{equation*}
and so
\begin{equation*}
w_0(x) = u_{0}(x)-  v_{0}^{ \theta}(x) \leq  u_{0}(- \infty) - v_{0}^{ \theta}( - \infty) = - v_{0}^{ \theta}( - \infty).
\end{equation*}
Thus all conditions in Part I of Theorem 1 are satisfied. If we specially choose
$\phi_{1}=\phi_{t}, \phi_{2}=\phi_{x}$ and $v_{1}(x)=u_{0}'(x)$  in
(\ref{1.6}), the function $v$ given in Part I is a weak solution of the Cauchy problem (\ref{1.1}) and (\ref{1.2}), Theorem 1 is proved.

\section{Proof of Theorem \ref{main2}}

First, we prove  Part I in Theorem \ref{main2}.
We note that $w_0 , z_0 \in C(\R) \subset W^{1,1} _{loc} (\R)$.
Since $w_0$ and $z_0$ are decreasing, \eqref{xy} implies that
\begin{eqnarray*}
\lim_{x \rightarrow -\infty} w_0 (x) \geq w_0 (x) >  z_0 (y) \geq \lim_{x \rightarrow \infty} z_(x).
\end{eqnarray*}
Hence there exists a large constant $M_0 >0$ such that if $M \geq M_0$, then
\begin{eqnarray*}
w_0 (-M) > z_0 (M).
\end{eqnarray*} 

Now we prepare some lemmas.
\begin{lemma} \label{eqeq}
Suppose that the assumptions \eqref{con-1}-\eqref{con3} are satisfied. Then 
\begin{eqnarray} 
\left\{  \begin{array}{ll} \label{eqwz}
w_t + \lambda_2 w_x =0,\\
z_t + \lambda_1 z_x =0
\end{array} \right.  
\end{eqnarray} 
is satisfied for a.a.  $(x,t) \in \R \times \R^{+}$.
\end{lemma}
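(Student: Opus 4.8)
The plan is to recover from the weak formulation \eqref{1.6} the interior distributional identities $v_t - u_x = 0$ and $u_t - c(v^s)_x = 0$ (choosing test functions supported in $\{t>0\}$ annihilates the initial terms, and $|v|^{s-1}v = v^s$ since $v \geq 0$ by \eqref{con-1}), and then to pass from this conservative form to the diagonal form \eqref{eqwz}. I would set $P := v^\theta = (z-w)/2$ and observe that \eqref{con--1} and \eqref{con--1-2} give $u = (w+z)/2 \in W^{1,1}_{loc}$ and $P \in W^{1,1}_{loc}$, both bounded, with $P \geq 0$. Writing $\mu := \theta v^{(s-1)/2} = \theta P^{(s-1)/(s+1)} = \lambda_2 = -\lambda_1$, a bounded continuous function of $P$ (hence of $(w,z)$), the substitution $w = u - P$, $z = u + P$ shows that, after adding and subtracting, the two equations in \eqref{eqwz} are equivalent to the single pair
\begin{equation*}
u_t = \mu\, P_x, \qquad P_t = \mu\, u_x ,
\end{equation*}
so it suffices to establish these two identities for a.a. $(x,t)$.

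For the first identity I would use the second conservation law. Since $v^s = P^{s/\theta}$ with $s/\theta = 2s/(s+1) > 1$, the map $r \mapsto r^{s/\theta}$ is $C^1$ on $[0,\infty)$ with locally Lipschitz derivative, so $v^s \in W^{1,1}_{loc}$ and the chain rule gives $(v^s)_x = \tfrac{s}{\theta} P^{(s-1)/(s+1)} P_x$ a.e.; multiplying by $c = \theta^2/s$ yields $c(v^s)_x = \mu P_x$, and combined with $u_t = c(v^s)_x$ this is exactly $u_t = \mu P_x$.

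The second identity is the main obstacle, because it forces one to differentiate $P = v^\theta$ while the inverse relation $v = P^{1/\theta}$ fails to be Lipschitz at the degeneracy $v=0$. I would split the argument over $\{P > 0\}$ and $\{P = 0\}$. On $\{P > 0\}$ the map $r \mapsto r^{1/\theta}$ is smooth, so by truncation (applying the chain rule to $g_\delta(P)$ for a Lipschitz modification $g_\delta$ of $r^{1/\theta}$ and letting $\delta \to 0$) one obtains $v \in W^{1,1}_{loc}(\{P>0\})$ with $P_t = \theta v^{\theta-1} v_t = \mu v_t$; since $v_t = u_x$ a.e. from the first conservation law, this gives $P_t = \mu u_x$ there. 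On the level set $\{P=0\}$, the standard fact that the gradient of a $W^{1,1}_{loc}$ function vanishes a.e. on any of its level sets (here the minimum value $0$ of $P$) gives $P_t = P_x = 0$ a.e., while $\mu = 0$ there, so $P_t = \mu u_x = 0$ holds as well.

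Assembling the two pieces yields $P_t = \mu u_x$ a.e., and together with $u_t = \mu P_x$ and the sum/difference equivalence this produces both equations in \eqref{eqwz} for a.a. $(x,t)$, proving the lemma. The delicate point throughout is precisely the behaviour on the degenerate set $\{v = 0\}$, where the chain rule for $v^\theta$ breaks down and must be replaced by the vanishing-gradient argument; everything away from this set is a routine Sobolev chain-rule computation.
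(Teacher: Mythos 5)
Your argument is correct and takes essentially the same route as the paper: both proofs extract the pointwise identities $u_t=\theta v^{\theta-1}\partial_x(v^{\theta})$ and $\partial_t(v^{\theta})=\theta v^{\theta-1}u_x$ from the weak formulation \eqref{1.6} (using $w,z\in W^{1,1}_{loc}$ to make $\partial_x(v^{\theta})$ an $L^1_{loc}$ function) and then add and subtract to obtain \eqref{eqwz}. The only divergence is your level-set decomposition at $\{v=0\}$, which is sound but unnecessary: since $\theta>1$ the map $r\mapsto r^{\theta}$ is $C^1$ with bounded derivative on the range of $v$, so once $v_t=u_x\in L^1_{loc}$ is known the forward chain rule gives $\partial_t(v^{\theta})=\theta v^{\theta-1}v_t$ directly, which is exactly how the paper proceeds.
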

\begin{proof}
From \eqref{con--1} and \eqref{con--1-2}, since $v \in L^\infty (\R \times \R_+ )$ and $\theta=(s+1)/2 >1$, $\partial_x v^{\theta}$ can be defined in $L^1 _{loc} (\R \times \R^{+})$. From the second equation in \eqref{1.6} and the integration by parts, we have
\begin{eqnarray} \label{eq1}
\int_0 ^\infty \int_{-\infty} ^\infty  u_t \phi_2 -\theta  v^{\theta -1}  \partial_x ( v^{\theta}) \phi_2 dxdt=0.
\end{eqnarray}
Hence 
\begin{eqnarray} \label{rea1}
u_t  -\theta  v^{\theta -1}  \partial_x ( v^{\theta})=0
\end{eqnarray}
 is satisfied for a.a.  $(x,t) \in \R \times \R^{+}$.
Similarly, we have that $v_t -u_x =0$ is  satisfied for a.a.  $(x,t) \in   \R \times \R^+$. By multiplying this equation equality by $\theta v^{\theta -1}$, we have
\begin{eqnarray} \label{rea2}
\theta v^{\theta -1} v_t  -  \theta v^{\theta -1} \partial_x u =0
\end{eqnarray}
From $\eqref{rea1} + \eqref{rea2}$ and $\eqref{rea1} - \eqref{rea2}$, we have the first and the second equation in \eqref{eqwz} respectively.
\end{proof}
From \eqref{con0} and \eqref{con2}, $\lim_{x\rightarrow \pm \infty} (u_0 (x), v_0 (x))$ exists.
We put
$$\lim_{x\rightarrow \pm \infty} (u_0 (x), v_0 (x)) = (u_\pm, v_\pm).$$

\begin{lemma} \label{liml}
Suppose that the assumptions \eqref{con-1}-\eqref{con3} is satisfied, then we have
\begin{eqnarray} \label{lim}
\lim_{x\rightarrow \pm \infty} u(x,t) = u_\pm \ \mbox{and} \ \lim_{x\rightarrow \pm \infty} v(x,t) = v_\pm.
\end{eqnarray}
\end{lemma}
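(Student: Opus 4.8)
The plan is to reduce everything to the Riemann invariants $w$ and $z$ and then to exploit the uniform $L^1$ bound on their time derivatives. Since $u=(w+z)/2$ and $v^{\theta}=(z-w)/2$, and $v$ is recovered from $v^{\theta}$ because $v\geq 0$ and $\theta>0$, it suffices to show that for a.a. $t\geq 0$ the one-sided spatial limits $w_{\pm}(t):=\lim_{x\to\pm\infty}w(x,t)$ and $z_{\pm}(t):=\lim_{x\to\pm\infty}z(x,t)$ exist and are independent of $t$. Their existence is immediate: by \eqref{con0} the functions $w(\cdot,t)$ and $z(\cdot,t)$ are monotone for a.a. $t$, and by \eqref{con--1} they are bounded, so a bounded monotone function has finite limits at $\pm\infty$.

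For the time-independence I would fix such a $t$ and estimate the spatial tail of $w(x,t)-w(x,0)$. Using \eqref{con--1-2}, the regularity $w\in W^{1,1}_{loc}$ together with the global-in-$x$ bound \eqref{con3} lets me write $w(x,t)-w(x,0)=\int_0^t w_s(x,s)\,ds$ for a.e. $x$, whence by Tonelli's theorem
\begin{equation*}
\int_{M}^{\infty}|w(x,t)-w(x,0)|\,dx \leq \int_0^t \|w_s(s)\|_{L^1(\R)}\,ds \leq C\,t\,(\|w_x(0)\|_{L^1}+\|z_x(0)\|_{L^1})<\infty .
\end{equation*}
Thus $x\mapsto w(x,t)-w(x,0)$ is integrable on $[M,\infty)$; but by the previous step it converges to $w_+(t)-w_+(0)$ as $x\to\infty$, and an integrable function possessing a limit at infinity must have that limit equal to $0$. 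Hence $w_+(t)=w_+(0)$, and the symmetric arguments on $(-\infty,M]$ and for $z$ give $w_-(t)=w_-(0)$ and $z_\pm(t)=z_\pm(0)$.

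Finally I would translate back: $u(x,t)\to (w_\pm(0)+z_\pm(0))/2$ and $v^\theta(x,t)\to (z_\pm(0)-w_\pm(0))/2$ as $x\to\pm\infty$, and since $w_0+z_0=2u_0\to 2u_\pm$ and $z_0-w_0=2v_0^\theta\to 2v_\pm^\theta$, these limits are exactly $u_\pm$ and $v_\pm$, which proves \eqref{lim}. The only delicate points are bookkeeping: every statement holds merely for a.a. $t$, so I must intersect the full-measure sets on which \eqref{con0}, \eqref{con--1-2} and \eqref{con3} hold; and I must justify the representation $w(x,t)-w(x,0)=\int_0^t w_s\,ds$ and the Tonelli interchange from $w_t\in L^1(\R\times(0,t))$, which is the main (though routine) technical obstacle. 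Note that the precise structure \eqref{eqwz} of the equations plays no role here---only the monotonicity, the boundedness, and the $L^1$ control of $w_t$ and $z_t$ are used.
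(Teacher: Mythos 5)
Your proof is correct, but it takes a genuinely different route from the paper's. The paper first invokes Lemma \ref{eqeq} to get the transport equations \eqref{eqwz}, combines them with $w_x,z_x\leq 0$ and $0\leq v\leq \|v_0\|_{L^\infty}$ to obtain the differential inequalities $0\leq w_t\leq -\lambda_M w_x$ and $\lambda_M z_x\leq z_t\leq 0$, and then runs a mollification argument along the lines $x+\lambda_M t=\mathrm{const}$ to derive the pointwise sandwich $w_0(x)\leq w(x,t)\leq w_0(x-\lambda_M t)$ and $z_0(x+\lambda_M t)\leq z(x,t)\leq z_0(x)$, from which \eqref{lim} is read off by squeezing. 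You instead bypass the PDE structure entirely: you use only monotonicity and boundedness (\eqref{con0}, \eqref{con--1}) to get existence of the limits $w_\pm(t),z_\pm(t)$, and then the $L^1$ control \eqref{con--1-2}--\eqref{con3} of $w_t,z_t$ plus the elementary fact that an $L^1$ function with a limit at infinity has limit zero, to show these limits are time-independent. Your approach is shorter and, as you note, independent of \eqref{eqwz}; the paper's approach yields strictly more (a quantitative finite-propagation-speed estimate comparing $w(\cdot,t)$, $z(\cdot,t)$ with translates of the data, which also gives monotonicity of $w,z$ in $t$). Two technical points you should make explicit: \eqref{con3} is only useful because $\|w_x(0)\|_{L^1}+\|z_x(0)\|_{L^1}<\infty$, which follows from $w_0,z_0$ being bounded and decreasing (so the total variation equals the difference of the limits at $\pm\infty$); and the representation $w(x,t)-w(x,0)=\int_0^t w_s(x,s)\,ds$ requires identifying the $W^{1,1}$ trace of $w$ at $t=0$ with the prescribed $w_0$, which you correctly flag as the routine obstacle. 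With those points filled in, the argument is complete.
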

\begin{proof}
 Since $w$ and $z$ are decreasing with $x$, from the definition of $w$ and $z$, we have $u_x \leq 0$. So, from $ v_t - u_x =0  $, we have that
$v(x,t)$ is decreasing with $t$ for a.a. $x\in \R$. Hence $ 0 \leq v (x,t) \leq v_0 (x) \leq  \|v_0 \|_{L^\infty}$.  We put $\lambda_M = \theta   \|v_0 \|_{L^\infty}^{\frac{s-1}{2}}$.
Since $w_x, z_x \leq 0$ and $  0 \leq v \leq C_M$, by \eqref{eqwz}, we have 
\begin{eqnarray}
0 \leq  w_t  \leq -\lambda_M w_x \ \mbox{and} \
 \lambda_M z_x \leq z_t  \leq 0. \label{wzin}
\end{eqnarray}
We set $\rho_{j,\varepsilon}=\varepsilon^{-1} \rho_j (\cdot /\varepsilon )$  as standard mollifiers  with $x$ and $t$  for $j=1$ and $2$ respectively ($\rho_j \in C^\infty _0 (\R)$ and $\rho_j \geq 0$ and $\int_{-\infty} ^\infty  \rho_j (\cdot ) dx =1 $ for $j=1, 2$).
We note that
$$
\int_0 ^\infty \rho_{2,\varepsilon } (t-s) w_s (x,s) ds =\partial_t \int_0 ^\infty \rho_{2,\varepsilon } (t-s) w_s (x,s) ds + \rho_{2,\varepsilon } (t) w (x,0) .
$$
Hence, applying the mollifiers to the both side of the first inequality in \eqref{wzin}, we have
\begin{eqnarray*}
 {w_\varepsilon }_t  -\lambda_M {w_\varepsilon }_x +\rho_{2,\varepsilon } (t) \rho_{1,\varepsilon } * w_0 (x)  \leq 0 ,
\end{eqnarray*}
where $w_\varepsilon =\int_0 ^{\infty} \rho_{2,\varepsilon } (t-s) \rho_{1,\varepsilon } * w (x,s) ds$.
Noting $w_\varepsilon  (x+\lambda_M t,t)$ is differentiable with $t$, we have
\begin{eqnarray*} 
\dfrac{d}{dt}w_\varepsilon  (x+\lambda_M t, t) +\rho_{2,\varepsilon } (t) \rho_{1,\varepsilon } * w_0  (x+\lambda_M t) \leq 0.
\end{eqnarray*}
Integrating  on $[0,t]$, we have
\begin{eqnarray*}
w_\varepsilon  (x+\lambda_M t, t) - w_\varepsilon  (x,0) + \int_0 ^t \rho_{2,\varepsilon } (s) \rho_{1,\varepsilon } * w_0 (x+\lambda_M s) ds \leq 0.
\end{eqnarray*}
Since $w (x, \cdot)$ and $w(\cdot, t )$ are continuous with a.a. fixed $t$ and $x$ respectively, we have
$$
\lim_{\varepsilon \rightarrow 0} w_\varepsilon  (x,0) \rightarrow \int_{-\infty} ^0 \rho_2 (t) dt w_0 (x) 
$$ 
and 
$$
\int_0 ^t \rho_{2,\varepsilon } (s) \rho_{1,\varepsilon } * w (x+\lambda_M s, 0) ds \rightarrow \int_{0} ^\infty \rho_2 (t) dt w_0 (x).
$$
Hence we have by taking $\varepsilon \rightarrow 0$,
\begin{eqnarray*}
w  (x+\lambda_M t,t) - w_0 (x)\leq 0.
\end{eqnarray*}
Therefore we have 
$$
w_0 (x) \leq  w (x,t)  \leq w (x-\lambda_M t, 0)
$$
and
$$
z_0 (x+ \lambda_M  t)   \leq z (x,t)  \leq z_0 (x),
$$
which implies that \eqref{lim}.
\end{proof}
We put 
\begin{eqnarray*}
F(t)=-\int_{-\infty} ^\infty  v (x,t)- v_0 (x)dx.
\end{eqnarray*}
From the first equation \eqref{1.3} and Lemma \ref{liml},  we have
\begin{eqnarray}\label{ff}
F(t)= (u_- - u_+)t.
\end{eqnarray}
We divide $F(t)$ into the three parts as follows:
\begin{align*}
F(t)&=\left(\int_{-\infty} ^{-M} +\int_{-M} ^{M} + \int_{M} ^\infty   \right) v (x,t)- v_0 (x) dx \\
&=F_1 (t) + F_2 (t) + F_3 (t).
\end{align*}
Now we estimate $F_1 (t)$. From the first equation in \eqref{1.3} and Lemma \ref{liml}, we have
\begin{eqnarray*}
\dfrac{d}{dt} F_1 (t) =\int_{-\infty} ^{-M} -u_x (x,t) dx =-u(t,-M)+ u_-. 
\end{eqnarray*}
Since $w_0 (-M) \leq w (-M,t)$, from the definition of $w$, we have
$$
-u (M,t) \leq -v^\theta (-M,t) - w_0 (-M) \leq -w_0 (-M) 
$$
Hence we have
\begin{eqnarray}\label{f1}
F_1 (t) \leq -t (u_- - w_0 (-M)).
\end{eqnarray}
Since $v\geq 0$ under our contradiction argument and $v \leq C$, we can estimate $F_2 (t)$ as
\begin{eqnarray} \label{f2}
F_2 (t) \leq C_M,
\end{eqnarray}
where $C_M$ is a positive constant depending on $M$.
In the same way as in the above estimate of $F_1$, we have
\begin{eqnarray} \label{f3}
F_3 (t) \leq t (-u_+ +z_0 (M)).
\end{eqnarray}
From \eqref{ff}, \eqref{f1}, \eqref{f2} and \eqref{f3}, we have
\begin{eqnarray*}
C_M + (z_0 (M) - w_0 (-M) + u_- - u_+)t \geq (u_- - u_+)t .
\end{eqnarray*}
Hence  we have 
$$
C_M >(w_0 (-M)- z_0 (M))t ,
$$
which gives a contradiction for large $t$, since $w_0 (-M)- z_0 (M)>0$.
Therefore we complete the proof of Part I in Theorem \ref{main2}.

Next we prove Part II in Theorem \ref{main2}.
For the solutions of \eqref{1.1}, we put $u=\int_{-\infty} ^x v_t dx$ and $\tilde{u}=-\int_x ^\infty v_t dx$. 
We note $u$ is well-defined, if we assume \eqref{con4}.
We show the following Lemma.
\begin{Lemma}
Let $v$ be a weak solutions of the generalized Cauchy problem  \eqref{1.5} and $u$ and $\tilde{u}$ be as above.
Suppose that \eqref{con-1}-\eqref{con4} are satisfied. Then
$(v,u)$  and $(v,\tilde{u})$ are solutions of the generalized Cauchy problem  \eqref{1.6}.
\end{Lemma}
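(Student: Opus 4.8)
The plan is to verify the two weak identities in \eqref{1.6} separately, treating the first equation as an immediate consequence of the definition of the potentials and reserving the scalar weak formulation \eqref{1.5} for the second. First I would record the basic facts about $u$ and $\tilde u$. Under \eqref{con4} the function $v_t(\cdot,t)$ lies in $L^1(\R)$ for a.a. $t$, so $u(x,t)=\int_{-\infty}^x v_t\,d\xi$ and $\tilde u(x,t)=-\int_x^\infty v_t\,d\xi$ are well-defined and bounded, with $u_x=\tilde u_x=v_t$ in the sense of distributions in $x$, while $u-\tilde u=\int_{-\infty}^\infty v_t\,d\xi$ depends on $t$ alone; their initial traces are $u_0(x)=\int_{-\infty}^x v_1$ and $\tilde u_0(x)=-\int_x^\infty v_1$. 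The first equation of \eqref{1.6} then follows directly: integrating by parts in $x$ gives $\int\int u\phi_{1x}=-\int\int u_x\phi_1=-\int\int v_t\phi_1$, and one further integration by parts in $t$ together with $v(\cdot,0)=v_0$ reproduces $\int\int v\phi_{1t}-u\phi_{1x}\,dxdt+\int v_0\phi_1(\cdot,0)\,dx=0$. Since $u$ and $\tilde u$ share the same $x$-derivative and $\int \phi_{1x}\,dx=0$, the identity holds for $\tilde u$ as well.

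The substance is the second equation. Given $\phi_2\in C_0^\infty(\R\times[0,\infty))$, I would introduce the spatial potential $\Psi(x,t)=\int_x^\infty\phi_2(\xi,t)\,d\xi$, so that $\Psi_x=-\phi_2$ and $\Psi_{xx}=-\phi_{2x}$. By Fubini (licit since $v_t\in L^1$ and $\phi_2$ has compact support) one rewrites $\int\int u\phi_{2t}=\int\int v_t\Psi_t$, the flux term becomes $-\int\int c|v|^{s-1}v\,\phi_{2x}=\int\int c|v|^{s-1}v\,\Psi_{xx}$, and the initial term becomes $\int u_0\phi_2(\cdot,0)=\int v_1\Psi(\cdot,0)$; thus the left-hand side of the second equation of \eqref{1.6} would reduce, formally, to the scalar identity \eqref{1.5} evaluated at $\phi=\Psi$. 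The obstruction is that $\Psi$ is not an admissible test function: it tends to the nonzero constant $c_\phi(t)=\int_{-\infty}^\infty\phi_2(\xi,t)\,d\xi$ as $x\to-\infty$, and the naive splitting of $\int\int v\Psi_{tt}$ produces a divergent integral.

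I would remedy this by testing \eqref{1.5} against the corrected, genuinely compactly supported function $\hat\Psi=\Psi-c_\phi(t)\zeta(x)$, where $\zeta\in C^\infty$ equals $1$ near $-\infty$ and $0$ near $+\infty$. Expanding \eqref{1.5} for $\hat\Psi$ and carrying the $\zeta$-correction terms along, all integrals remain convergent, and after the bookkeeping one obtains
$$
L=-\int_0^\infty c_\phi(t)\Big(\partial_t\!\int_{-\infty}^\infty v_t\zeta\,dx-\int_{-\infty}^\infty c|v|^{s-1}v\,\zeta''\,dx\Big)\,dt,
$$
where $L$ denotes the left-hand side of the second equation of \eqref{1.6}. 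It then remains to show that this right-hand side vanishes, which is the crux. Testing \eqref{1.5} with separated test functions $\zeta(x)\chi(t)$, $\chi\in C_0^\infty((0,\infty))$, yields the distributional identity $\partial_t^2\int v\zeta\,dx=\int c|v|^{s-1}v\,\zeta''\,dx$ on $(0,\infty)$; combined with $\partial_t\int v\zeta\,dx=\int v_t\zeta\,dx$ (again using $v_t\in L^1$ and the absolute continuity of $v$ in $t$), the bracket is zero, whence $L=0$ and the second equation holds for $u$. The case of $\tilde u$ is identical after replacing $\Psi$ by $\tilde\Psi=c_\phi(t)-\Psi=\int_{-\infty}^x\phi_2$, which pushes the constant tail to $+\infty$.

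The main obstacle throughout is precisely the failure of compact support of the natural potential and the resulting delicate cancellation of the ``constant-at-infinity'' contributions; this is where the integrability \eqref{con4} of $v_t$ and the $L^\infty$ bounds \eqref{con--1} on $v$ and on the flux $|v|^{s-1}v$ enter, the latter being exactly what makes the $\zeta''$-supported correction terms $O(1/R)$-small and the separated weak identity applicable.
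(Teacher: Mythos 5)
Your overall strategy --- test \eqref{1.5} against the spatial potential $\Psi(x,t)=\int_x^\infty\phi_2(\xi,t)\,d\xi$ of the test function, after correcting for the fact that $\Psi$ tends to the nonzero constant $c_\phi(t)$ as $x\to-\infty$ --- is the same as the paper's, which implements the correction by multiplying by the dilated cutoff $\psi(\varepsilon x)$ and letting $\varepsilon\to0$. The difficulty you correctly identify (non-admissibility of $\Psi$ and divergence of $\int\!\!\int v\,\Psi_{tt}$) is the real one. However, your resolution of it has a genuine gap at the step you yourself call the crux. The function $\zeta(x)\chi(t)$, with $\zeta\equiv1$ near $-\infty$, is not an admissible test function for \eqref{1.5}, so you cannot ``test \eqref{1.5} with separated test functions $\zeta\chi$'' to obtain $\partial_t^2\int v\zeta\,dx=\int c|v|^{s-1}v\,\zeta''\,dx$. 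Worse, that identity is not even meaningful as written: $v$ is only known to be bounded and nonnegative, with a possibly strictly positive limit $v_-$ at $x=-\infty$ (Lemma \ref{liml}), so $\int_{-\infty}^\infty v\zeta\,dx$ is in general $+\infty$. The finite version of your bracket, $\partial_t\int v_t\zeta\,dx-\int c|v|^{s-1}v\,\zeta''\,dx$, can only be reached by first integrating by parts in $t$ (trading $v$ for $v_t\in L^1$ via \eqref{con4}) and then truncating $\zeta$ at scale $R$ with a cutoff whose second derivative has $L^1$ norm $O(1/R)$, and passing $R\to\infty$. That is, the auxiliary identity you invoke requires exactly the cutoff-and-limit argument you introduced $\zeta$ to avoid; as written the argument is circular at this point.

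Two further remarks. First, the same order-of-operations issue appears in deriving your displayed formula for $L$: you must keep $\Psi_{tt}-c_\phi''\zeta$ together (or integrate by parts in $t$ first) before splitting, since $\int\!\!\int v\,\Psi_{tt}$ and $\int\!\!\int v\,c_\phi''\zeta$ are separately divergent. Second, \eqref{con4} only gives $v_t(\cdot,t)\in L^1(\R)$ for almost every fixed $t$, with no uniform-in-$t$ bound; to justify dominated convergence in the $t$-integral (whether in your $R\to\infty$ limit or the paper's $\varepsilon\to0$ limit) you need the uniform $L^1$ estimates \eqref{con2}--\eqref{con3}, which the paper explicitly invokes and your write-up does not. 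The repair of your argument is routine but, once carried out, it essentially reproduces the paper's proof; the first equation of \eqref{1.6} and the reduction to $\tilde u$ are handled correctly.
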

\begin{proof}
We put $\psi \in C^\infty _0 (\R)$ such that $\psi (0) =1$. Replacing $\phi$ by $\psi (\varepsilon x) \int_x ^{\infty} \phi  (t,y) dy$  in the generalized Cauchy problem \eqref{1.5}, from \eqref{con2}, \eqref{con3} and the integration by parts, we have that
\begin{align*}
&\int_0 ^\infty u(x,t)  \psi(\varepsilon y)  \partial_t \phi (t,y) -c  (v^s)  \partial^2 _x \left(\psi (\varepsilon x) \int_x ^{\infty} \phi  (t,y) dy\right) dx dt \\
&-\int_{-\infty} ^\infty  u_0 (x)   \psi(\varepsilon y)   \phi (0,y) dy=0.
\end{align*}
Taking $\varepsilon \rightarrow 0$,  from \eqref{con2}, we have 
\begin{align*}
\int_0 ^\infty \int_{-\infty} ^\infty u (x,t)   \partial_t \phi (t,y) -c  (v^s)  \partial _x  \phi  (t,y) dy dx dt -\int_{-\infty} ^\infty  u_0 (x)     \phi (0,y) dy =0.
\end{align*}
Similarly,  replacing $\phi$ by $\psi (\varepsilon x) \int_{-\infty} ^{x} \phi  (t,y) dy$, we obtain that
\begin{align*}
\int_0 ^\infty \int_{-\infty} ^\infty  \tilde{u} (x,t)   \partial_t \phi (t,y) - c  (v^s)  \partial_x  \phi  (t,y) dy dx dt -\int_{-\infty} ^\infty  \tilde{u_0} (x)     \phi (0,y) dy =0.
\end{align*}
\end{proof}
For the Riemann invariant \eqref{1.9-2}, we obtain same results as in Lemmas \ref{eqeq} and \ref{liml}.
Therefore we can show Part II in Theorem \ref{main2} by using similar contradiction argument to in the proof of Part I.

\begin{Remark}
Here we show that the solution constructed in Theorem 1 satisfies \eqref{con--1-2}, \eqref{con2} and \eqref{con3},
if $w_0 (x)$ and $z_0 (x)$ are  bounded, decreasing and in $W^{1.1} _{loc} (\R)$.
Since the approximate solution $(R,S)=(w_x, z_x)$ to \eqref{2.8} satisfies that
$$
R_t + (\lambda_2 R)_x =R_{xx} \ \mbox{and} \ S_t + (\lambda_2 S)_x =S_{xx},
$$
we have from the negativity of $R$ and $S$
\begin{align*}
\| w_x (t)\|_{L^1}+ \| z_x (t) \|_{L^1} & = -\int_{-\infty} ^\infty  R(x,t) +S(x,t)  dx  \\
& = -\int_{-\infty} ^\infty  R(x,0) +S(x,0)  dx \\
& =\lim_{x\rightarrow \infty}2(u_0 (-x) - u_0 (x)).
\end{align*}
The $\lim_{x\rightarrow \infty}2(u(-x) - u(x))$ exists and is finite, since $u_0 =(w_0 +z_0 )/2$  is decreasing and bounded.
Hence, for solutions of  the non-viscous equations \eqref{1.3},  we have that $w(t,\cdot) , z(t,\cdot) \in W^{1,1} _{loc} (\R)$ for a.a. $t \geq 0$
 and that
\eqref{con2} is satisfied.
Furthermore, in the similar way as to the proof of Lemma \ref{eqeq}, we have that $w , z \in W^{1,1} _{loc} (\R \times \R^{+})$ 
and \eqref{eqwz} are satisfied. Therefore we have by the boundedness of $\lambda_1$ and $\lambda_2$ that
\begin{align*}
\| w_t (t)\|_{L^1}+ \| z_t (t) \|_{L^1} & \leq \| \lambda_2 w_x (t)\|_{L^1}+ \|  \lambda_2 z_x (t) \|_{L^1}\\
 & \leq  C( \|  w_x (0)\|_{L^1}+ \|   z_x (0) \|_{L^1}).
\end{align*}

\end{Remark}





\noindent {\bf Acknowledgments:}  This paper of the second author is partially supported by a Qianjiang professorship of Zhejiang Province of China,
the National Natural Science Foundation of China (Grant No.  11271105). The second author is supported by  Grant-in-Aid for Young Scientists Research (B), No. 16K17631.

\end{document}